\documentclass[12pt]{article}
\usepackage{amsmath,amsthm,amssymb,color}
\voffset-2.5 cm
\hoffset -1.5 cm
\textwidth 16 cm
\textheight 23 cm
\thispagestyle{empty}
\input amssym.def
\input amssym.tex
\date{}

\newtheorem{lemma}{Lemma}[section]

\newtheorem{theorem}[lemma]{Theorem}
\newtheorem{corollary}[lemma]{Corollary}
\newtheorem{remark}[lemma]{Remark}

\begin{document}

\title{{\bf On Hardy $q$-inequalities}}

\author {Lech Maligranda, Ryskul Oinarov and Lars-Erik Persson}

\date{}

\maketitle

\renewcommand{\thefootnote}{\fnsymbol{footnote}}

\footnotetext[0]{2010 {\it Mathematics Subject Classification}:
26D10, 26D15, 39A13}
\footnotetext[0]{{\it Key words and phrases}: Inequalities, Hardy type inequalities, Hardy operators,
Riemann-Liouville operators, $q$-analysis, sharp constants, discrete Hardy type inequalities}

\begin{abstract}
\noindent {\small Some $q$-analysis variants of Hardy type inequalities of the form
\begin{equation} \label{1}
\int_0^b \left (x^{\alpha-1} \int_0^x t^{- \alpha} f(t) \, d_qt \right)^p d_qx \leq C \, \int_0^b f^p(t) \, d_qt
\end{equation}
with sharp constant $C$ are proved and discussed. A similar result with the Riemann-Liouville operator
involved is also proved. Finally, it is pointed out that by using these techniques we can also obtain some 
new discrete Hardy and Copson type inequalities in the classical case.}
\end{abstract}

\begin{section}
{\bf Introduction and preliminaries}
\end{section}

In recent years quantum calculus ($q$-calculus) has been actively developed. Many continuous scientific
problems have their discrete versions by using the so-called $q$-calculus. This $q$-calculus has
numerous applications in combinatorics, special functions, fractals, dynamical systems, number theory,
computational methods, quantum mechanics, information technology, etc. (see \cite{Ba05}, \cite{Er00},
\cite{Er12}, \cite{Ex83}, \cite{KC02}).

At present $q$-analogues of many inequalities from the classical analysis have been established but not 
$q$-inequalities of Hardy type (see, e.g., \cite{Ga04}, \cite{MQ09} and \cite{Kr11}, \cite{SRM09}, \cite{Su11}).

The Hardy inequality and its various generalizations play an important role in classical analysis.
Therefore during the last fifty years a huge amount of papers has been devoted to Hardy and Hardy type
inequalities in various spaces. The main results and their applications in classical analysis are given in
the books \cite{KP03} and \cite{KMP07}.

The main aim of this paper is to establish $q$-analogue of the classical Hardy type
inequalities
\begin{equation}\label{1.1}
\int_0^\infty \left(x^{\alpha -1}\int_0^x t^{-\alpha}f(t) \, dt \right)^p \, dx < \left(\frac{p}{p-\alpha p -
1}\right)^p \int_0^\infty f^p(t) \, dt, ~ f \geq 0,
\end{equation}
where $\alpha < 1-\frac{1}{p}$ with either $p \geq 1$ (unless $f \equiv 0$) or $p < 0$ and $f > 0$ 
and (with the Riemann-Liouville operator involved)
\begin{equation}\label{1.2}
\int_0^\infty
\left(\frac{1}{x^\alpha\Gamma(\alpha)}\int_0^x (x-t)^{\alpha-1} f(t) \, dt \right)^p \, dx <
\left[\frac{\Gamma\left(1-\frac{1}{p}\right)} {\Gamma\left(\alpha+1-\frac{1}{p}\right)} \right]^p \int_0^\infty f^p(t) \, dt, ~ f \geq 0,
\end{equation}
where $p>1, \alpha>0$, unless $f \equiv 0$ and with the best constant.
For $\alpha=0$ inequality (\ref{1.1}) becomes the classical Hardy inequality
\begin{equation}\label{1.3}
\int_0^\infty \left( \frac{1}{x}\int_0^x f(t) \, dt \right)^p dx < \left(\frac{p}{p-1}\right)^p
\int_0^\infty f^p(t) \, dt, \ f \geq 0, f \not \equiv 0,
\end{equation}
and its corresponding discrete version reads
\begin{equation}\label{1.4}
\sum_{n=1}^\infty\left(\frac{1}{n} \sum_{k=1}^n
a_k\right)^p<\left(\frac{p}{p-1}\right)^p \sum_{n=1}^\infty
a^p_n,\ p>1, \ a_n\geq 0, a_n \not \equiv 0.
\end{equation}
All these estimates have numerous applications in analysis. We will prove the $q$-estimate (\ref{1}) of Hardy type
(\ref{1.1}) for $b = \infty$ and $b = 1$ with the sharp constants, and also the $q$-analogue of the estimate (\ref{1.2})
with the sharp constant.

The paper is organized in the following way: after definitions and notations below, in Section 2 we prove
the $q$-analogue of the inequality (\ref{1.1}), that is, inequality (\ref{1}) for $b = \infty$ and $b = 1$ with the sharp
constants.

In Section 3 we define a fractional $q$-analogue of the Riemann-Liouville operator $I^{\alpha}_q$ and prove a
$q$-analogue of inequality (\ref{1.2}) with the sharp constant.

Finally, in Section 4 we are pointing out that using techniques of $q$-calculus we can also obtain some
new discrete Hardy, Copson and matrix type inequalities in the classical case.

We now present some notations and definitions from the $q$-calculus, which are necessary
for understanding this paper. They are taken mainly from the book  \cite{KC02}.

Let  $0 < q < 1$ be fixed. The {\it definite $q$-integral} or the {\it $q$-Jackson integral}
(see \cite{Ja10} and \cite{KC02}) of a function $f: [0, b) \rightarrow {\mathbb R}, 0 < b \leq \infty,$
is defined by the following formula
\begin{equation}\label{2.1}
\int_0^x f(t) \, d_qt = (1 - q) \,x\, \sum_{k=0}^\infty q^k f(q^k x),\ {\rm for} \,\, x\in (0, b],
\end{equation}
and the {\it improper $q$-integral} of a function $f: [0, \infty) \rightarrow {\mathbb R}$ by the relation
\begin{equation}\label{2.2}
\int_0^\infty f(t) \, d_qt = (1-q)\sum_{k=-\infty}^\infty q^k f(q^k),
\end {equation}
provided that the series on the right hand sides of (\ref{2.1}) and (\ref{2.2}) converge absolutely.

For $0 < a < b \leq \infty$ we define the $q$-integral
$$
\int_a^b f(t) \,d_qt = \int_0^b f(t) \,d_qt - \int_0^a f(t) \,d_qt.
$$
In particular, for $x\in(0,\infty)$, it yields that
\begin{equation}\label{2.3}
\int_x^\infty f(t) \, d_qt = \int_0^\infty f(t) \, d_qt - \int_0^x f(t) \, d_qt.
\end{equation}
In the theory of $q$-analysis the $q$-analogue $[\alpha]_q$ of a number $\alpha\in
\mathbb{R}$ is defined by
\begin{equation}\label{2.4}
 [\alpha]_q = \frac{1-q^\alpha}{1-q}.
 \end{equation}

\begin{section}
{\bf The Hardy inequality in $q$--analysis}
\end{section}

We consider the $q$-integral analogue of the Hardy inequality of the form (\ref{1.1}). Our first main
result in this section reads:

\begin{theorem} \label{th3.1}
Let $ \alpha < \frac{p-1}{p}$. If either $1 \leq p < \infty$ and $f \geq 0$ or $p<0$ and $f > 0$, then the inequality
\begin{equation}\label{3.1}
\int_0^\infty x^{p(\alpha -1)} \left(\int_0^x t^{-\alpha} f(t) \,d_qt\right)^p d_qx \leq C\int_0^\infty f^p(t) \,d_qt,
\end{equation}
holds with the constant
\begin{equation}\label{3.2}
C = \frac{1}{[\frac{p-1}{p}-\alpha]_q^p} \,.
\end{equation}
In the case when $0<p<1$ the inequality (\ref{3.1}) for $f \geq 0$ holds in the reverse
direction with the constant (\ref{3.2}). Moreover, in all three cases the
constant (\ref{3.2}) is best possible.
\end{theorem}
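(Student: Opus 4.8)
The plan is to exploit that, by (2.1)--(2.2), every $q$-integral in (3.1) is a sum over the geometric grid $\{q^k : k \in \mathbb{Z}\}$, so that (3.1) is equivalent to a weighted inequality for sequences. Writing $x = q^m$, $t = q^k$ and setting $u_k = q^{k/p} f(q^k)$, a direct substitution using $\delta := \frac{p-1}{p} - \alpha$ (which is positive by hypothesis) turns the left-hand side of (3.1) into $(1-q)^{p+1}\sum_m \big(\sum_{j\ge 0} q^{j\delta} u_{m+j}\big)^p$ and the right-hand side into $C(1-q)\sum_k u_k^p$. Thus (3.1) reduces to the convolution-type estimate $(1-q)^p\sum_m \big(\sum_{j\ge0}q^{j\delta}u_{m+j}\big)^p \le C\sum_k u_k^p$, whose kernel $w_j = q^{j\delta}$ satisfies $\sum_{j\ge0}w_j = (1-q^\delta)^{-1} = (1-q)^{-1}[\delta]_q^{-1}$.

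For the main case $1 \le p < \infty$ I would prove this discrete inequality by a Young/H\"older argument: split $w_j = w_j^{1/p'} w_j^{1/p}$, apply H\"older in $j$ to pull out $(\sum_j w_j)^{1/p'}$, raise to the $p$th power, sum in $m$, and interchange the order of summation. Since all terms are nonnegative the interchange is unconditionally justified, and it produces exactly one further factor $\sum_j w_j$, giving $C = (1-q)^p(\sum_j w_j)^p = [\delta]_q^{-p}$, which is (3.2). Equivalently, one can argue entirely in $q$-integral language: apply the $q$-H\"older inequality to $\int_0^x t^{-\alpha}f\,d_qt$ after inserting a power weight $t^{-\gamma}$, evaluate the resulting power $q$-integrals by $\int_0^x t^{\beta}\,d_qt = x^{\beta+1}/[\beta+1]_q$, and then interchange the order of $q$-integration. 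I expect this interchange to be the main obstacle: unlike classical Fubini, the $q$-analogue carries a boundary (diagonal $t=x$) contribution, because the operation $f\mapsto\int_0^x f\,d_qt$ includes the endpoint $t=x$ whereas the tail $f\mapsto\int_t^\infty f\,d_qx$ excludes it. Keeping track of this diagonal term is precisely what converts the naive estimate into the correct bracket $[\delta]_q^{-p}$ rather than a spurious value differing by a factor $q^{\delta}$; the safest way to avoid the pitfall is to carry out the interchange at the level of the double sum, as above.

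Sharpness of (3.2) I would establish by testing with the $q$-discretisation of the classical extremiser $f(t)=t^{-1/p}$, that is, by taking $u_k = 1$ on a long block $-N \le k \le N$ and $u_k = 0$ otherwise. For such $u$ the inner sum $\sum_{j\ge0}q^{j\delta}u_{m+j}$ equals $\sum_j w_j = (1-q^\delta)^{-1}$ up to boundary corrections of bounded total mass, so the ratio of the two sides of the reduced inequality tends to $[\delta]_q^{-p}$ as $N \to \infty$; hence no smaller constant can work. This simultaneously identifies the $\ell^p\to\ell^p$ norm of the convolution operator as exactly $\sum_j w_j$, which is the source of best-possibility.

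Finally, the two remaining regimes follow by repeating the same computation with the inequalities reversed. For $0<p<1$ and $f\ge0$ I would invoke the reverse H\"older inequality (the conjugate exponent being negative), which flips the estimate and yields (3.1) in the reverse direction with the same constant (3.2); for $p<0$ and $f>0$ the strict positivity of $f$ makes all the powers well defined and the reverse-H\"older bookkeeping again delivers (3.1) with (3.2). In each of these cases the same block test functions show that the constant cannot be improved, so (3.2) is best possible throughout.
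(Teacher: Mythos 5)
Your reduction to the discrete convolution inequality is exactly the paper's starting point (their display for $L(f)$ is your $(1-q)^{p+1}\sum_m\bigl(\sum_{j\ge 0}q^{j\delta}u_{m+j}\bigr)^p$ in different variables), but from there you take a genuinely cleaner route for $1\le p<\infty$: Young's inequality $\|w\ast u\|_{\ell^p}\le\|w\|_{\ell^1}\|u\|_{\ell^p}$ for the one-sided kernel $w_j=q^{j\delta}$, obtained by splitting $w_j=w_j^{1/p'}w_j^{1/p}$ and interchanging two nonnegative sums. The paper instead invokes duality in $\ell_p(\mathbb Z)$ and a weighted H\"older--Rogers/Schur-test splitting with the weight $q^{j(\alpha-1/p')}$; both yield the same constant $(1-q)^p\|w\|_{\ell^1}^p=[\delta]_q^{-p}$, and your version avoids the supremum over $g$ entirely. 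Your worry about a ``diagonal term'' in a $q$-Fubini step is moot precisely because you work at the level of double sums, where Tonelli applies unconditionally. Your reverse-H\"older treatment of $0<p<1$ and $p<0$ is also sound and matches the paper's in substance (they carry an auxiliary exponent $\mu$, you carry the splitting $w_j^{1/p}w_j^{1/p'}$), and for $p\ge 1$ and for $0<p<1$ your long-block test sequences $u_k=\chi_{[-N,N]}(k)$ do establish sharpness, playing the role of the paper's power test functions $t^\beta$ with $\beta\to-1/p$.

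There is, however, one concrete failure: the claim that ``the same block test functions'' prove sharpness when $p<0$. For $p<0$ a sequence vanishing outside a block is inadmissible, since $\sum_k u_k^p=\infty$ whenever some $u_k=0$ (this is exactly why the theorem assumes $f>0$ there), so the ratio of the two sides is $0/\infty$ and gives no lower bound on $C$. Nor can you fix this by setting $u_k=M$ off the block and letting $M\to\infty$: the inner sums $\sum_j w_ju_{m+j}$ then blow up for every $m$, and their $p$-th powers collapse the left-hand side to $0$. The paper circumvents this with the two-exponent test functions $f_{\beta_1,\beta_2}(t)=t^{\beta_1}\chi_{(0,1]}(t)+t^{\beta_2}\chi_{(1,\infty)}(t)$ with $\alpha-1<\beta_1<-\tfrac1p<\beta_2$, for which both sides are finite; one discards the $t>1$ contribution on the left by positivity, lets $\beta_2\to\infty$ to kill it on the right, and then lets $\beta_1\to-\tfrac1p$. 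You need some such admissible family (everywhere positive, with $\sum u_k^p<\infty$) to complete the $p<0$ case; the rest of your argument stands.
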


\begin{proof} Let $1<p<\infty$. Consider the estimate (\ref{3.1}) based on the definitions
(\ref{2.1}) and (\ref{2.2}). We have
\vspace{-3mm}
\begin{eqnarray*}
L(f): &=&
\int_0^\infty x^{p(\alpha -1)} \left(\int_0^x t^{-\alpha} f(t) \, d_qt \right)^p d_qx\\
&=&
\int_0^\infty x^{p(\alpha -1)} \left((1-q)
\sum\limits_{i=0}^\infty x^{1-\alpha}q^{(1-\alpha)i}f(xq^i)\right)^p d_qx \\
&=&
(1-q)^{p+1}\sum_{j=-\infty}^{\infty}q^{jp(\alpha-1)}\left(\sum\limits_{i=0}^\infty
q^{(i +j)(1-\alpha)}f(q^{i+j})\right)^p q^j \\
&=&
(1-q)^{p+1}\sum_{j=-\infty}^{\infty}q^{j[p(\alpha-1)+1]}
\left(\sum\limits_{i=j}^\infty
q^{i(1-\alpha)}f(q^{i})\right)^p\equiv (1-q)^{p+1}I^p.
\end{eqnarray*}
Let $g=\{g_k\}_{k=-\infty}^\infty\in l_{p'}(\mathbb{Z}),$ $g\ge
0, \|g\|_{l_{p'}}=1$, where $\frac{1}{p}+\frac{1}{p'}=1,$ Moreover, let
$\theta(z)$ be Heaviside's unit step function, that is, $\theta(z)=1$
for $z \geq 0$ and $\theta(z)=0$ for  $z < 0$. Then, based on the duality
principle in $l_p(\mathbb{Z}),$ $p>1$, and the H\"{o}lder-Rogers inequality 
(cf. \cite{Ma98} for the explanation why not only H\"older name should be here), we find that
\begin{eqnarray*}
I
&=&
\sup\limits_{\|g\|_{l_{p'}}=1} \sum_i\sum_j g_jq^{j\left(\alpha-\frac{1}{p'}\right)}
\theta(i-j)q^{i\left(\frac{1}{p'}-\alpha\right)}q^\frac{i}{p}f(q^i) \\
&\leq&
\sup\limits_{\|g\|_{l_{p'}}=1}\left(\sum_i\sum_j g_j^{p'}
q^{j\left(\alpha-\frac{1}{p'}\right)}\theta(i-j)q^{i\left(\frac{1}{p'}-\alpha\right)}\right)^{\frac{1}{p'}} \\
&\times&
\left(\sum_i\sum_j f^p(q^i)q^i q^{\left(\frac{1}{p'}-\alpha\right)i}\theta(i-j)q^{j\left(\alpha-\frac{1}{p'}\right)}\right)^\frac{1}{p} \\
&\leq&
\sup\limits_{\|g\|_{l_{p'}}=1}\left(\sum_j g_j^{p'}
q^{j\left(\alpha-\frac{1}{p'}\right)}\sum_{i=j}^\infty
q^{i\left(\frac{1}{p'}-\alpha\right)}\right)^{\frac{1}{p'}} \\
&\times&
\left(\sum_i f^p(q^i)q^i q^{i\left(\frac{1}{p'}-\alpha\right)}
\sum_{j=-\infty}^i
q^{j\left(\alpha-\frac{1}{p'}\right)}\right)^\frac{1}{p} =
\sup\limits_{\|g\|_{l_{p'}}=1} I_1(g) \,I_2(f).
\end{eqnarray*}
Since
\begin{eqnarray*}
I_1^{p'}(g)
&=&
\sum_j g_j^{p'} q^{j\left(\alpha-\frac{1}{p'}\right)}q^{j\left(\frac{1}{p'}-\alpha\right)}
\sum_{i=0}^\infty q^{i\left(\frac{1}{p'}-\alpha\right)} =
\frac{1}{1-q^{\frac{1}{p'}-\alpha}}\sum_j g_j^{p'} \\
&=&
\frac{1}{1-q^{\frac{1}{p'}-\alpha}} \, \| g \|_{l_{p'}}^{p'} =
\frac{1}{(1-q)\left[\frac{p-1}{p}-\alpha\right]_q}  \, \| g \|_{l_{p'}}^{p'},
\end{eqnarray*}
and
\begin{eqnarray*}
I_2^p(f)
&=&
\sum_i f^p(q^i)q^i q^{i\left(\frac{1}{p'}-\alpha\right)}
q^{i\left(\alpha-\frac{1}{p'}\right)} \sum_{j=0}^\infty
q^{j\left(\frac{1}{p'}-\alpha\right)} =
\frac{1}{1-q^{\frac{1}{p'}-\alpha}}\sum_i q^if^p(q^i) \\
&=&
\frac{1}{(1-q)^2\left[\frac{p-1}{p}-\alpha\right]_q}
\int\limits_0^\infty f^p(t) \,d_qt
\end{eqnarray*}
it follows that
$$
I^p\leq \frac{1}{(1-q)^{p+1}\left[\frac{p-1}{p}-\alpha\right]_q^p}
\int\limits_0^\infty f^p(t) \,d_qt.
$$
Putting the above calculations together we deduce that
\begin{equation*}
\int_0^\infty x^{p(\alpha -1)}\left(\int_0^x t^{-\alpha}f(t) \,d_qt\right)^p d_qx =
(1-q)^{p+1}I^p \leq
\frac{1}{\left[\frac{p-1}{p}-\alpha\right]_q^p} \int_0^\infty f^p(t) \, d_qt,
\end{equation*}
which means that inequality (\ref{3.1}) holds with constant (\ref{3.2}).

Now, we will show that the constant (\ref{3.2}) is best possible. For $\beta > -\frac{1}{p}$ let
$f_\beta(t) = t^\beta \chi_{(0, 1]}(t), t > 0$. Then
\begin{equation*}
\int_0^\infty f_\beta^p(t) \, d_qt = (1-q)\sum_{i=-\infty}^\infty q^i f^p_\beta (q^i)= (1-q)\sum_{i=0}^\infty q^i  q^{p\beta
i} = \frac{1-q}{1-q^{1+p\beta}} = \frac{1}{[1+p\beta]_q},
\end{equation*}
and
\begin{eqnarray*}
L(f_{\beta})
&=&
\int_0^\infty  x^{p(\alpha -1)}\left(\int_0^x t^{-\alpha} f_\beta(t) \, d_qt\right)^p d_qx \\
&=&
(1-q)\sum_{j=-\infty}^{\infty}q^{j[p(\alpha-1)+1]}\left((1-q)\sum_{i=0}^\infty q^{(i+j)(1-\alpha)}f_\beta(q^{i+j})\right)^p \\
&\geq&
(1-q)^{p+1}\sum_{j=0}^\infty q^{j[p(\alpha-1)+1]}\left(\sum_{i=j}^\infty q^{i(1-\alpha)}q^{i\beta}\right)^p \\
&= &
(1-q)^{p+1}\sum_{j=0}^\infty q^{j(1+p\beta)}\left(\sum_{i=0}^\infty q^{i(1-\alpha+\beta)}\right)^p =
\left(\frac{1-q}{1-q^{1-\alpha+\beta}}\right)^p \cdot \frac{1}{[1+p\beta]_q}.
\end{eqnarray*}
Since
\begin{equation*}
\sup\limits_{\beta > - \frac{1}{p}} \frac{1}{1-q^{1-\alpha+\beta}} =\frac{1}{1-q^{1-\frac{1}{p}-\alpha}}
= \frac{1}{1-q^{\frac{p-1}{p}-\alpha}},
\end{equation*}
it follows that for the best constant $C$ in (\ref{3.1}) the following estimate
is valid
\begin{equation*}
C \geq \sup\limits_{\beta > - \frac{1}{p}} \frac{L(f_{\beta})}{\int_0^\infty f_\beta^p(t) \, d_qt }
\geq \sup\limits_{\beta > - \frac{1}{p}} \left(\frac{1-q}{1-q^{1-\alpha+\beta}}\right)^p =
\frac{1}{\left[\frac{p-1}{p}-\alpha\right]_q^p},
\end{equation*}
which shows that the constant (\ref{3.2}) is sharp constant in (\ref{3.1}).

If $p=1$, then
\begin{eqnarray*}
L(f) &=&
(1-q)^{2}\sum_{j=-\infty}^{\infty}q^{j\alpha}
\sum\limits_{i=j}^\infty q^{i(1-\alpha)}f(q^{i}) =
(1-q)^2\sum_{i=-\infty}^{\infty}q^{i(1-\alpha)}f(q^{i})\sum\limits^{i}_{j=-\infty}q^{j\alpha} \\
&=&
(1-q)^2\sum_{i=-\infty}^{\infty}q^{i}f(q^{i})\sum\limits_{j=0}^{\infty}q^{-j\alpha} = 
\frac{1}{[-\alpha]_q}\int\limits_0^\infty f(t)d_qt.
\end{eqnarray*}

Let $p<0$ and $f > 0$. If we denote $\mu=\frac{1}{p'}\left(\frac{1}{p'}-\alpha\right)$, then
\begin{eqnarray*}
L(f)
&=&
\int\limits_0^\infty x^{p(\alpha -1)}\left(\int\limits_0^x t^{-\alpha} f(t) \,d_qt\right)^p d_qx \\
&=&
(1-q)^{p+1}\sum_{j=-\infty}^{\infty}q^{j[p(\alpha-1)+1]}
\left(\sum\limits_{i=j}^\infty q^{i(1-\alpha)}f(q^{i})\right)^p \\
&=&
(1-q)^{p+1}\sum_{j=-\infty}^{\infty}q^{j[p(\alpha-1)+1]}
\left(\sum\limits_{i=j}^\infty q^{i
\mu}q^{i(1-\alpha-\mu)}f(q^{i})\right)^p.
\end{eqnarray*}
Taking into account the assumption $p<0$ and the fact that then the H\"{o}lder-Rogers inequality holds in the
reverse direction in this case we obtain
\begin{eqnarray*}
L(f)
&\leq&
(1-q)^{p+1}\sum_{j=-\infty}^{\infty}q^{j[p(\alpha-1)+1]}
\left( \sum\limits_{i=j}^\infty q^{ip'\mu}\right)^{p-1}
 \sum\limits_{i=j}^\infty q^{ip(1-\alpha-\mu)}f^p(q^{i} )\\
&=&
(1-q)^{p+1}  \left( \sum\limits_{i=0}^\infty
q^{ip'\mu}\right)^{p-1}
\sum_{j=-\infty}^{\infty}q^{j[p(\alpha-1)+1+p\mu]}
\sum\limits_{i=j}^\infty q^{ip(1-\alpha-\mu)} f^p(q^{i})\\
&=&
\frac{(1-q)^{p+1}}{\left(1-q^{\frac{p-1}{p}-\alpha}\right)^{p-1}
} \sum_{i=-\infty}^{\infty} f^p(q^{i})
q^{i\left(1+\frac{1}{p'}-\alpha\right)} \sum_{j=-\infty}^i
q^{j\left(\alpha-\frac{1}{p'}\right)} \\
&=&
\frac{(1-q)^{p+1}}{\left(1-q^{\frac{p-1}{p}-\alpha}\right)^{p-1}
} \sum_{j=0}^\infty q^{j\left(\frac{1}{p'}-\alpha\right)}
\sum_{i=-\infty}^\infty q^i f^p(q^i) \\
&=&
\left(\frac{1-q}{1-q^{\frac{p-1}{p}-\alpha}}\right)^p
\int\limits_0^\infty f^p (t) \,d_qt =
\left[\frac{p-1}{p} - \alpha\right]_q^{-p} \int_0^\infty f^p(t) \,d_qt.
\end{eqnarray*}
This implies that the inequality (\ref{3.1}) holds with the constant $C$ in (\ref{3.2}). Now, we will give 
lower estimate for the best constant $C$ in inequality (\ref{3.1}). 
For $ \alpha-1<\beta_1 < - \frac{1}{p} < \beta_2$ let
$f_{\beta_1, \beta_2}(t ) = t^{\beta_1} \chi_{(0, 1]}(t) + t^{\beta_2} \chi_{(1, \infty)}(t), t > 0$.
Then
\begin{eqnarray*}
\int_0^\infty f_{\beta_1, \beta_2}^p(t) \, d_q t
&=&
(1-q)\left(\sum_{i=-\infty}^{-1} q^{i(1+p\beta_2)}+
\sum_{i=0}^{\infty} q^{i(1+p\beta_1)}\right) \\
&=&
(1-q)\left(\frac{q^{|1+p\beta_2|}}{1-q^{|1+p\beta_2|}}+\frac{1}{1-q^{1+p\beta_1}}\right) : =
F^-(\beta_1,\beta_2)
\end{eqnarray*}
and
\begin{eqnarray*}
L(f_{\beta_1, \beta_2})
&=&
\int_0^\infty x^{p(\alpha -1)} \left(\int\limits_0^x t^{-\alpha} f_{\beta_1, \beta_2}(t) \,d_qt\right)^p d_qx \\
&=&
(1-q)^{p+1}\sum_{i=-\infty}^{\infty}q^{i(1+p(\alpha-1))}\left(\sum_{j=i}^{\infty}q^{j(1-\alpha)} f_{\beta_1, \beta_2}(q^j)\right)^p \\
&=&
(1-q)^{p+1} \left[ \sum_{i=-\infty}^{-1}q^{i(1+p(\alpha-1))}\left(\sum_{j=i}^{-1}q^{j(1-\alpha+\beta_2)}
+\sum_{j=0}^{\infty}q^{j(1-\alpha+\beta_1)}\right)^p\right.\\
&+&
\left.\sum_{i=0}^{\infty}q^{i(1+p(\alpha-1))}\left(\sum_{j=i}^{\infty}q^{j(1-\alpha+\beta_1)}\right)^p \right] \\
&>&
(1-q)^{p+1}\sum_{i=0}^{\infty}q^{i(1+p(\alpha - 1))}\left(\sum_{j=i}^{\infty}q^{j(1-\alpha+\beta_1)}\right)^p \\
&=&
(1-q)^{p+1}\sum_{i=0}^{\infty}q^{i(1+p \beta_1)}\left(\sum_{j=0}^{\infty}q^{j(1-\alpha+\beta_1)}\right)^p \\
&=&
\frac{1-q}{1-q^{1+p \beta_1}}\left(\frac{1-q}{1-q^{1-\alpha+\beta_1}} \right)^p: = F^+(\beta_1, \beta_2).
\end{eqnarray*}
If $C$ is the best constant in (\ref{3.2}), then
\begin{eqnarray*}
C
&\geq&
\sup_{ \alpha-1<\beta_1 < - \frac{1}{p}} \, \lim_{\beta_2 \rightarrow \infty} \frac{F^+(\beta_1, \beta_2)}{F^-(\beta_1,\beta_2)} 
 = \sup_{ \alpha-1<\beta_1 < - \frac{1}{p}} \left(\frac{1-q}{1-q^{1-\alpha+\beta_1}} \right)^p \\
&=&
\left(\frac{1-q}{1-q^{1-\alpha-1/p}}\right)^p = \frac{1}{\left[\frac{p-1}{p}-\alpha\right]_q^p} \,.
\end{eqnarray*}
The last estimate together with the earlier shows that constant (\ref{3.2}) is sharp in all cases.

Finally, we consider the case when  $0<p<1$. Let us denote $\gamma = \frac{p-1}{p}-\alpha$.
For any function $f \geq 0$ for which the right hand side of (\ref{3.1}) is finite, we find that
\begin{eqnarray*}
[\gamma]_q^{-1} \, \int_0^\infty f^p(t) \,d_qt
&=&
\frac{(1-q)^2}{1-q^\gamma}\sum_{j=-\infty}^\infty q^jf^p(q^j) \\
&=&
(1-q)^2\sum_{j=-\infty}^\infty q^jf^p(q^j)\sum_{i=0}^\infty q^{i\gamma}\\
&=&
(1-q)^2\sum_{j=-\infty}^\infty q^jf^p(q^j)\sum_{i=-\infty}^0 q^{-i\gamma}\\
&=&
(1-q)^2\sum_{j=-\infty}^\infty q^{j(1+\gamma)}f^p(q^j)\sum_{i=-\infty}^jq^{-i\gamma}\\
&=&
(1-q)^2 \sum_{i=-\infty}^\infty q^{-i\gamma}\sum_{j=i}^\infty q^{j(1-p)\gamma}q^{jp(1-\alpha)}f^p(q^j) = J.
\end{eqnarray*}
Using the H\"{o}lder-Rogers inequality with powers $\frac{1}{p}$ and $\frac{1}{1-p}$ we obtain
\begin{eqnarray*}
J
&\leq&
(1-q)^2 \sum_{i=-\infty}^\infty q^{-i\gamma}
\left(\sum_{k=i}^\infty q^{k\gamma}\right)^{1-p}\left(\sum_{j=i}^\infty q^{j(1-\alpha)}f(q^j)\right)^p\\
&=&
[\gamma]_q^{p-1} \, (1-q)^{p+1} \sum_{i=-\infty}^\infty q^{-ip\gamma} \left(\sum_{j=i}^\infty q^{j(1-\alpha)}f(q^j)\right)^p\\
&=&
[\gamma]_q^{p-1} \, (1-q)\sum_{i=-\infty}^\infty q^iq^{ip(\alpha-1)}\left((1-q)q^i\sum_{j=0}^\infty q^j q^{-(i+j)\alpha}
f(q^{i+j})\right)^p\\
&=&
[\gamma]_q^{p-1} \, \int_0^\infty x^{p(\alpha -1)} \left(\int_0^x t^{-\alpha} f(t) \, d_qt\right)^p d_qx,
\end{eqnarray*}
which means that the following inequality holds
\begin{equation}\label{3.11}
\int_0^\infty f^p(t) \, d_qt \leq \big[ \gamma \big]_q^p
\int_0^\infty x^{p(\alpha -1)}\big(\int_0^x t^{-\alpha} f(t) \, d_qt \big)^p d_qx
\end{equation}
for all functions $f \geq 0$ for which the left hand side of (\ref{3.11}) is finite.

Next, we show that the constant $\big[ \gamma \big]_q^p = \big[ \frac{p-1}{p} - \alpha \big]_q^p $ in (\ref{3.11}) is
sharp. For $\alpha-1 < \beta < - \frac{1}{p}$ let
$f_\beta(t) = t^\beta \chi_{[1, \infty)}(t), t > 0$.
Then
\begin{eqnarray*}
\int_0^\infty  f_\beta^p(t) \, d_qt
&=&
(1-q)\sum_{i=-\infty}^\infty q^i f_\beta^p(q^i)
= (1-q)\left[ \sum_{i=-\infty}^0 q^i f_\beta^p(q^i)+ \sum_{i=1}^\infty q^i f_\beta^p(q^i) \right] \\
&=&
(1-q)  \sum_{i=-\infty}^0 q^{i(1+p\beta)} =(1-q)  \sum_{i=0}^\infty q^{i|1+p\beta|} = \frac{1-q}{1-q^{|1+p\beta|}}
\end{eqnarray*}
and
\begin{eqnarray*}
L(f_{\beta})
&=&
\int_0^\infty x^{p(\alpha -1)} \left(\int_0^x t^{-\alpha} f_\beta(t) \, d_qt \right)^p d_qx \\
&=&
(1-q) \sum_{j=-\infty}^\infty q^{j[p(\alpha-1)+1]} \left((1-q)q^j\sum_{i=0}^\infty  q^iq^{-(i+j)\alpha}f_\beta(q^{i+j})\right)^p \\
&=&
(1-q)^{p+1}\left[\sum_{j=-\infty}^0 q^{j[p(\alpha-1)+1]} \left(\sum_{i=j}^\infty  q^{i(1-\alpha)}f_\beta(q^{i})\right)^p\right. \\
&+&
\left.\sum_{j=1}^\infty q^{j[p(\alpha-1)+1]} \left(\sum_{i=j}^\infty  q^{i(1-\alpha)}f_\beta(q^{i})\right)^p \right] \\
&=&
(1-q)^{p+1}\sum_{j=-\infty}^0 q^{j[p(\alpha-1)+1]} \left(\sum_{i=j}^0 q^{i(1-\alpha+\beta)}\right)^p \\
&=&
(1-q)^{p+1}\sum_{j=-\infty}^0q^{j[p(\alpha-1)+1]} q^{jp(1-\alpha+\beta)}\left(\sum_{i=0}^{-j} q^{i(1-\alpha+\beta)}\right)^p \\
&\leq&
\frac{(1-q)^{p+1}}{(1-q^{1-\alpha+\beta})^p}\sum_{j=-\infty}^0q^{j[1+p\beta]}=\frac{1-q}{1-q^{|1+p\beta|}}
\left( \frac{1-q}{1-q^{1-\alpha+\beta}}\right)^p.
\end{eqnarray*}
If inequality (\ref{3.11}) holds with the best constant $C > 0$, then
\begin{eqnarray*}
C
&\geq&
\sup\limits_{\beta \in (\alpha-1, -\frac{1}{p})} \frac{\int_0^\infty  f_\beta^p(t) \, d_qt }{L(f_{\beta}) }
\geq \sup\limits_{\beta \in (\alpha-1, -\frac{1}{p})} \left( \frac{1-q^{1-\alpha+\beta}}{1-q}\right)^p \\
&=&
\left(
\frac{1-q^{\frac{p-1}{p}-\alpha}}{1-q}\right)^p = \left[\frac{p-1}{p}-\alpha\right]_q^p = [\gamma]_q^p,
\end{eqnarray*}
and this shows that the constant $[\gamma]_q^p$ in (\ref{3.11}) is sharp. The proof of Theorem \ref{th3.1} is complete.
\end{proof}

\begin{remark} \label{r2.2} Constant in $q$-analogue of inequality (\ref{1.1}) is smaller than the one in  (\ref{1.1}). 
In fact, if $\alpha < 1 - 1/p$ with $p \geq 1$ or $p < 0$, then 
\begin{equation} \label{R2.2}
\frac{1}{\left[\frac{p-1}{p}-\alpha\right]_q} < \frac{p}{p - \alpha p - 1} \,\,\, {\it for} \,\, \alpha > - 1/p.
\end{equation}
Inequality (\ref{R2.2}) is reversed for $\alpha < - 1/p$. For $\alpha = - 1/p$ both sides in (\ref{R2.2}) are equal to $1$.
\end{remark}

Estimate (\ref{R2.2}) means that $\frac{1-q}{1-q^{\frac{p-1}{p} - \alpha}} < \frac{p}{p - \alpha p - 1}$ for any $0 < q < 1$,
which is true since the function $h(q): = \frac{p(1-q^{\frac{p-1}{p} - \alpha})}{p - \alpha p - 1} + q - 1$ has derivative
$h^{\prime}(q) = - q^{-\frac{1}{p} - \alpha} + 1 < 0$  for $\alpha > - 1/p$, and so $h(q) > h(1) = 0$.

Next, we consider the Hardy inequality on a finite interval. Without loss of generality we consider
only the interval $[0, 1]$, since in the $q$-integral we are allowed to change variables in the form $z=x l$,
$0 < l < \infty$ (see \cite{KC02}). Therefore, a $q$-integral on the interval $[0, l]$ naturally can be
reduced to a $q$-integral on the interval $[0, 1]$.

Hence, we consider the inequality (\ref{1}) with $b=1$ and formulate our next main theorem in this Section.

\begin{theorem} \label{th3.2}
Let $\alpha<1-\frac{1}{p}$.  If either $1 \leq p < \infty$ and $f \geq 0$ or $p<0$ and $f > 0$, then the 
strict inequality
\begin{equation}\label{18}
\int_0^1x^{p(\alpha -1)} \left(\int_0^x t^{-\alpha} f(t) \,d_qt\right)^p d_qx <  \frac{1}{[\frac{p-1}{p}-\alpha]_q^p}
\int_0^1 f^p(t) \,d_qt,
\end{equation}
holds (unless $f \equiv 0$) and the constant $[\frac{p-1}{p}-\alpha]_q^{-p}$ is sharp.
\end{theorem}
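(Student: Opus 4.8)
The plan is to reduce the finite-interval estimate to the infinite-interval Theorem \ref{th3.1} whenever possible, and to fall back on a direct discretized computation only where that reduction breaks down. Throughout I would write the $q$-integrals as the series from the proof of Theorem \ref{th3.1}, so that restricting the outer variable to $(0,1]$ simply means restricting the summation index $j$ to $j\ge 0$, while the inner integral $\int_0^x t^{-\alpha}f\,d_qt$ with $x=q^j\le 1$ only ever samples $f$ at the points $q^i$ with $i\ge j\ge 0$, i.e.\ on $(0,1]$. Thus the left-hand side of (\ref{18}) is exactly
\[
L_1(f) = (1-q)^{p+1}\sum_{j=0}^{\infty} q^{j[p(\alpha-1)+1]}\Big(\sum_{i=j}^{\infty} q^{i(1-\alpha)} f(q^i)\Big)^{\!p},
\]
while the right-hand side involves $\int_0^1 f^p\,d_qt = (1-q)\sum_{i=0}^{\infty} q^i f^p(q^i)$.

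For $1\le p<\infty$ I would argue by extension. Let $\tilde f$ agree with $f$ on $(0,1]$ and vanish on $(1,\infty)$; then $\int_0^\infty \tilde f^p\,d_qt=\int_0^1 f^p\,d_qt$, and in the series for $L(\tilde f)$ the inner sum is unchanged for $j\ge 0$ but equals the constant $S_0=\sum_{i=0}^\infty q^{i(1-\alpha)}f(q^i)$ for every $j<0$. Hence
\[
L(\tilde f) = L_1(f) + (1-q)^{p+1} S_0^{\,p}\sum_{j=-\infty}^{-1} q^{j[p(\alpha-1)+1]},
\]
where the geometric tail converges precisely because $\alpha<1-\tfrac1p$ forces the exponent $p(\alpha-1)+1<0$. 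Applying Theorem \ref{th3.1} to $\tilde f$ gives $L(\tilde f)\le[\tfrac{p-1}{p}-\alpha]_q^{-p}\int_0^1 f^p\,d_qt$, and since $S_0>0$ whenever $f\not\equiv 0$ the displayed tail is strictly positive, yielding $L_1(f)<L(\tilde f)\le[\tfrac{p-1}{p}-\alpha]_q^{-p}\int_0^1 f^p\,d_qt$. (If $\int_0^1 f^p\,d_qt=\infty$ the inequality is trivial.)

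The case $p<0$ is the main obstacle, since extending $f$ by zero destroys both positivity (here $f>0$ is required) and the convergence of the tail, whose exponent $p(\alpha-1)+1$ is now positive. I would instead repeat the reverse-H\"older argument of Theorem \ref{th3.1} but on the truncated index range $j\ge 0$: applying the reversed H\"older--Rogers inequality to each inner sum (over $i\ge j$, which is unaffected by the truncation) and interchanging the order of summation produces, for each $i\ge 0$, the finite geometric factor $\sum_{j=0}^{i} q^{-j\gamma}$ with $\gamma=\tfrac{p-1}{p}-\alpha>0$, in place of the full factor $\sum_{j=-\infty}^{i} q^{-j\gamma}=\frac{q^{-i\gamma}}{1-q^\gamma}$ that appears in the infinite case. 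Since
\[
\sum_{j=0}^{i} q^{-j\gamma}=\frac{q^{-i\gamma}-q^{\gamma}}{1-q^\gamma}<\frac{q^{-i\gamma}}{1-q^\gamma},
\]
and all remaining factors ($f^p(q^i)>0$ and positive powers of $1-q$ and $1-q^\gamma$) are strictly positive, this single strict term-by-term comparison already delivers the strict inequality (\ref{18}) with the same constant, the exponent bookkeeping being identical to Theorem \ref{th3.1}.

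Finally, sharpness needs test functions supported in $(0,1]$, and conveniently the extremizers of Theorem \ref{th3.1} already are: taking $f_\beta(t)=t^\beta\chi_{(0,1]}(t)$ one finds $L_1(f_\beta)/\int_0^1 f_\beta^p\,d_qt=\big(\tfrac{1-q}{1-q^{1-\alpha+\beta}}\big)^p$, and letting $\beta\to-\tfrac1p$ (from above when $p\ge 1$, from below when $p<0$, always keeping $\beta>\alpha-1$ so the inner sum converges) drives $1-\alpha+\beta\to\gamma$ and the ratio up to $[\tfrac{p-1}{p}-\alpha]_q^{-p}$. Thus the constant cannot be improved even though each individual inequality is strict. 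I expect the only genuinely delicate point to be the $p<0$ analysis, where one must check that every truncated series still converges under the hypothesis $\alpha<1-\tfrac1p$ (equivalently $\gamma>0$) and that the lone strict comparison in the geometric factor is not cancelled by the reversed H\"older step.
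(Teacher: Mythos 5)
Your proof is correct, but your treatment of the case $1\le p<\infty$ takes a genuinely different route from the paper's. The paper reruns the duality/H\"older--Rogers machinery of Theorem~\ref{th3.1} with the outer index restricted to $j\ge 0$ and extracts strictness from the fact that the truncated dual sum $\sum_{j=0}^{i} q^{j(\alpha-\frac{1}{p'})}$ is strictly smaller than the full sum $\sum_{j=-\infty}^{i} q^{j(\alpha-\frac{1}{p'})}$ by which it is then majorized; you instead extend $f$ by zero to $(1,\infty)$, apply Theorem~\ref{th3.1} as a black box, and obtain strictness from the strictly positive discarded tail $(1-q)^{p+1}S_0^{\,p}\sum_{j\le -1}q^{j[p(\alpha-1)+1]}$, whose convergence is exactly where the hypothesis $\alpha<1-\frac{1}{p}$ enters. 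Your version is more modular and avoids repeating the H\"older computation, at the small cost of checking that this tail is finite and that $S_0>0$ when $f\not\equiv 0$ (and of dismissing the degenerate case $\int_0^1 f^p\,d_qt=\infty$, which the paper also leaves implicit). You also correctly identify that the zero-extension reduction cannot work for $p<0$ (the tail exponent changes sign and the extension violates $f>0$), and there you fall back on precisely the paper's mechanism: reverse H\"older--Rogers, interchange of summation, and the strict comparison $\sum_{j=0}^{i}q^{-j\gamma}<\sum_{j=-\infty}^{i}q^{-j\gamma}$. The sharpness argument with $f_\beta(t)=t^\beta\chi_{(0,1]}(t)$ and $\beta\to-\frac{1}{p}$ coincides with the paper's, and your computation of the ratio is in fact cleaner: the paper's displayed lower bound $F^+(\beta)$ for $p<0$ carries a spurious strict inequality and a leftover $\lim_{\beta_1\to\infty}$ copied from Theorem~\ref{th3.1}, whereas your evaluation of $L_1(f_\beta)/\int_0^1 f_\beta^p\,d_qt$ is exact.
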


\begin{proof} Theorem \ref{th3.2} can be proved in a similar way as Theorem \ref{th3.1}. Hence, we will
only point out some differences of the corresponding relations. In the case when $p>1$ we have
\begin{eqnarray*}
\int_0^1 x^{p(\alpha-1)} \left(\int_0^x t^{-\alpha} f(t) d_qt \right)^pd_qx
&=&
(1-q)^{p+1}\sum\limits_{j=0}^{\infty}q^{j[p(\alpha-1)+1]}
\left(\sum\limits_{i=j}^{\infty}q^{i(1-\alpha)}f(q^i)\right)^p \\
&=&
(1-q)^{p+1} \, I^p,
\end{eqnarray*}
and

\begin{eqnarray*}
I
&<&
\sup\limits_{||g||_{p'}=1,g\geq 0} \left(\sum\limits_{j=0}^{\infty}g_i^{p'}q^{i(\alpha-\frac{1}{p'})}
\sum\limits_{i=j}^{\infty}q^{i(\frac{1}{p'}-\alpha)}\right)^{\frac{1}{p'}}
\left(\sum\limits_{i=0}^{\infty}f^p(q^i)q^i
q^{i(\frac{1}{p'}-\alpha)}
\sum\limits_{j=-\infty}^iq^{i(\alpha-\frac{1}{p'})}\right)^{\frac{1}{p}} \\
&=&
\sup\limits_{||g||_{p'} = 1,g\geq 0} I_1(g) I_2(f),
\end{eqnarray*}
respectively. If $p=1$, then
\begin{eqnarray*}
\int_0^1 x^{\alpha-1} \int_0^x t^{-\alpha} f(t) d_qt d_qx
&=&
(1-q)^2\sum\limits_{j=0}^{\infty}q^{j\alpha}
\sum\limits_{i=j}^{\infty}q^{i(1-\alpha)}f(q^i) \\
&=&
(1-q)^2 \sum\limits_{i=0}^{\infty}q^{i(1-\alpha)}f(q^i)
\sum\limits_{j=0}^iq^{j\alpha} \\
&=&
(1-q)^2 \sum\limits_{i=0}^{\infty}q^if(q^i)
\sum\limits_{j=0}^iq^{-j\alpha} <
\frac{1}{[-\alpha]_q} \int\limits_{0}^{1} f(t) \, d_qt.
\end{eqnarray*}

The last strict inequalities give the validity of strict inequality (\ref{18}). The best constant in (\ref{18}) can be 
found by using the test functions $f_{\beta}(t) = t^{\beta}$ if $0<t<1$, where $\beta > - \frac{1}{p}$.
In the case when $p<0$ the proof of estimate (\ref{18}) can be done by use of the same method as in Theorem \ref{th3.1}
for $F$. In fact, we have
$$
L(f) < \frac{(1-q)^{p+1}}{\left(1-q^{\frac{p-1}{p}-\alpha}\right)^{p-1} } \sum_{i=0}^{\infty} f^p(q^{i})
q^{i\left(1+\frac{1}{p'}-\alpha\right)} \sum_{j=-\infty}^i
q^{j\left(\alpha-\frac{1}{p'}\right)} = \frac{1}{[\frac{p-1}{p}-\alpha]_q^p} \int\limits_{0}^{1} f^p(t) \, d_qt.
$$
This implies strict inequality in (\ref{18}). In order to obtain lower estimate we consider the test functions
$ f_{\beta}(t) = t^{\beta} \chi_{(0, 1]}(t)$ for $ \alpha-1 < \beta < - \frac{1}{p}$. Then
$$
\int_0^1 f_{\beta}^p(t) \,d_q t =
\frac{1-q}{1-q^{1+p\beta}}: = F^-(\beta)
$$
and
\begin{eqnarray*}
L(f_{\beta})
&=&
\int_0^1 x^{p(\alpha -1)}\left(\int\limits_0^x t^{-\alpha} f_{\beta}(t) \, d_qt\right)^p d_qx \\
&=&
(1-q)^{p+1}\sum_{i=0}^{\infty}q^{i(1+p(\alpha-1))}\left(\sum_{j=i}^{\infty}q^{j(1-\alpha+\beta)}\right)^p  \\
&> &
(1-q)^{p+1}\frac{1}{1-q^{1+p\beta}}\left(\frac{1}{1-q^{1-\alpha+\beta}}\right)^p \\
&=&
\frac{1-q}{1-q^{1+p\beta}}\left(\frac{1-q}{1-q^{1-\alpha+\beta}}\right)^p: =
F^+(\beta).
\end{eqnarray*}
Hence, if $C > 0$ is the best constant in inequality (\ref{18}), then we obtain estimate
\begin{equation*}
C \geq
 \sup_{ \alpha-1<\beta_2 < - \frac{1}{p}} \, \lim_{\beta_1 \rightarrow \infty} \frac{F^+(\beta)}{F^-(\beta)}
= \frac{1}{\left[\frac{p-1}{p}-\alpha\right]_q^p} \,, 
\end{equation*}
and the proof of Theorem \ref{th3.2} is complete.
\end{proof}

Next, we present some corresponding sharp reverse inequalities with additional terms for the case $0 < p < 1$.

\begin{theorem} \label{th3.3}
Let $0 < p < 1$ and $\alpha < \frac{p-1}{p}$. Then the following strict inequalities hold:
\begin{equation}\label{5.6}
\int_0^1 f^p(t)(1-t^{\frac{p-1}{p}-\alpha}) \, d_qt < C\int_0^1 x^{p(\alpha -1)}\left(\int_0^x t^{-\alpha} f(t) \, d_qt \right)^p d_qx,
\end{equation}
\begin{equation}\label{5.7}
\int_0^1 f^p(t) \, d_qt < C\int_0^1\left(1+\frac{\chi_{(q,1]}(x)}{[\frac{p-1}{p}-\alpha]_q}\right)
x^{p(\alpha -1)}\left(\int_0^x t^{-\alpha} f(t) \, d_qt \right)^p d_qx
\end{equation}
for all functions $f\geq0$ with the finite left hand side of (\ref{5.7}) unless $f \equiv 0$ and with the best constant
\begin{equation}\label{5.8}
C = \left[\frac{p-1}{p}-\alpha\right]_q^p.
\end{equation}
\end{theorem}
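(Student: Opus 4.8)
The plan is to rerun the $0<p<1$ computation from the proof of Theorem~\ref{th3.1}, but on the $q$-lattice of $[0,1]$, i.e.\ with every sum restricted to the indices $j\ge 0$ corresponding to the points $q^j\in(0,1]$. Put $\gamma=\frac{p-1}{p}-\alpha>0$ and, for $j\ge 0$, write the tails $S_j=\sum_{k=j}^{\infty}q^{k(1-\alpha)}f(q^k)$. The discretisation already performed in the proof of Theorem~\ref{th3.2} shows that the right-hand integral is
\[
R:=\int_0^1 x^{p(\alpha-1)}\Big(\int_0^x t^{-\alpha}f\,d_qt\Big)^p d_qx=(1-q)^{p+1}\sum_{j=0}^{\infty}q^{-jp\gamma}S_j^p ,
\]
and that $\int_0^x t^{-\alpha}f\,d_qt=(1-q)S_j$ at $x=q^j$; both inequalities thus become weighted discrete inequalities for $(S_j)$.

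I would prove (\ref{5.6}) first. Expanding the Jackson integral, using the telescoping identity $1-q^{j\gamma}=(1-q^\gamma)\sum_{i=0}^{j-1}q^{i\gamma}$ and $1-q^\gamma=(1-q)[\gamma]_q$, and then interchanging the order of summation, one reaches the exact identity
\[
\frac{1}{[\gamma]_q}\int_0^1 f^p(t)\,(1-t^\gamma)\,d_qt=(1-q)^2\sum_{i=1}^{\infty}q^{-i\gamma}\sum_{j=i}^{\infty}q^{j(1+\gamma)}f^p(q^j),
\]
which is the finite-interval version of the expression reached in Theorem~\ref{th3.1}, the only change being that the outer index now starts at $i=1$. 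Applying the H\"{o}lder--Rogers inequality with exponents $\frac1p,\frac1{1-p}$ to each inner sum (splitting $q^{j(1+\gamma)}f^p(q^j)=(q^{j\gamma})^{1-p}(q^{j(1-\alpha)}f(q^j))^p$ and using $\sum_{j\ge i}q^{j\gamma}=q^{i\gamma}/(1-q^\gamma)$) collapses the right-hand side to $[\gamma]_q^{p-1}\sum_{i\ge 1}q^{-ip\gamma}S_i^p$. Since $\sum_{i\ge 1}q^{-ip\gamma}S_i^p=(1-q)^{-(p+1)}R-S_0^p$, this yields the sharpened bound $\int_0^1 f^p(1-t^\gamma)\,d_qt\le[\gamma]_q^pR-(1-q)^{p+1}[\gamma]_q^pS_0^p$, whose last term is strictly negative unless $f\equiv0$; dropping it gives (\ref{5.6}) with strict inequality and with the constant (\ref{5.8}).

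For (\ref{5.7}) I would recover the full integral by writing $\int_0^1 f^p\,d_qt=\int_0^1 f^p(1-t^\gamma)\,d_qt+\int_0^1 f^p(t)\,t^\gamma\,d_qt$, or equivalently by running the same interchange of summations directly on $\int_0^1 f^p\,d_qt$ (without the factor $1-t^\gamma$). The truncation of the geometric series at the lattice point $x=1$ now leaves, in addition to the tail sum that reassembles $[\gamma]_q^pR$, one extra boundary block supported at $i=0$. This block is a single term carried by the point $x=1$ — the only $q$-lattice point of $(q,1]$, of $q$-measure $1-q$ — and is a multiple of $\big(\int_0^1 t^{-\alpha}f\,d_qt\big)^p=(1-q)^pS_0^p$; it is reassembled into the additional $\chi_{(q,1]}$-weight $1+\frac{\chi_{(q,1]}(x)}{[\gamma]_q}$ of (\ref{5.7}), and adding it to the bound for $\int_0^1 f^p(1-t^\gamma)$ gives (\ref{5.7}), again strictly.

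Sharpness of the constant (\ref{5.8}) in both inequalities I would obtain as in Theorems~\ref{th3.1}--\ref{th3.2}, by inserting the power functions $f_\beta(t)=t^\beta$ on $(0,1]$ and letting $\beta\to(-\tfrac1p)^+$: then $\int_0^1 f_\beta^p\,d_qt$ and $R$ both blow up like $(1-q^{1+p\beta})^{-1}$ while $S_0$ and the correction stay bounded, so the quotient of the two sides of each inequality tends to $[\gamma]_q^p$. The step I expect to be the real obstacle is the boundary bookkeeping in the third paragraph: one must justify interchanging the infinite double sums for every admissible $f$, isolate exactly the block surviving the truncation at $x=1$, and determine its precise coefficient so that it matches the stated $\chi_{(q,1]}$-weight with the exact constant — this is the only point where the finite-interval result genuinely departs from Theorem~\ref{th3.1}, and it is where the explicit form of the additional terms in (\ref{5.6})--(\ref{5.7}) is decided.
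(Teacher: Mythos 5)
Your treatment of (\ref{5.6}) is correct and is, up to repackaging, the paper's own argument: the authors likewise expand $\int_0^1 f^p\,d_qt$ against the geometric series for $(1-q^{\gamma})^{-1}$, interchange the order of summation, and apply the H\"older--Rogers inequality with exponents $\frac1p,\frac1{1-p}$ to the tail sums; your telescoping identity is exactly their decomposition with the term $\int_0^1 t^{\gamma}f^p\,d_qt$ moved to the left, and your source of strictness (dropping $-(1-q)^{p+1}[\gamma]_q^pS_0^p$) is a legitimate substitute for theirs. The sharpness argument with $f_\beta(t)=t^{\beta}$ and $\beta\to(-\tfrac1p)^+$ is also exactly what the paper does.

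The gap is in your third paragraph, i.e.\ in the passage to (\ref{5.7}), and it is not the bookkeeping you anticipate. The quantity you must add back is $\int_0^1 t^{\gamma}f^p(t)\,d_qt=(1-q)\sum_{j\ge0}q^{j(1+\gamma)}f^p(q^j)$, and this is \emph{not} ``a multiple of $\bigl(\int_0^1 t^{-\alpha}f\,d_qt\bigr)^p$'' that can be isolated by rearranging sums and reading off a coefficient: no interchange of summation converts a sum of $f^p$ into the $p$-th power of a sum of $f$. The missing idea --- and what the paper actually does --- is a \emph{second} application of the H\"older--Rogers inequality with the same exponents $\frac1{1-p},\frac1p$, now to the single sum over $j\ge0$ (splitting $q^{j(1+\gamma)}f^p(q^j)=q^{j(1-p)\gamma}\cdot q^{jp(1-\alpha)}f^p(q^j)$ and summing the geometric factor), which yields $\int_0^1 t^{\gamma}f^p\,d_qt<[\gamma]_q^{p-1}\bigl(\int_0^1 t^{-\alpha}f\,d_qt\bigr)^p$; adding this to (\ref{5.6}) produces the extra term supported at the single lattice point $x=1$. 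So the obstacle is one more inequality, not the justification of Tonelli (all series here are nonnegative). A further caution on the ``precise coefficient'' you worry about: the point $x=1$ carries $q$-measure $1-q$, so the estimate above corresponds to the weight $\chi_{(q,1]}(x)/\bigl((1-q)[\tfrac{p-1}{p}-\alpha]_q\bigr)$ rather than the $\chi_{(q,1]}(x)/[\tfrac{p-1}{p}-\alpha]_q$ printed in (\ref{5.7}); the paper's own derivation and its sharpness computation both use the un-normalized term $\frac{1}{[\gamma]_q}\bigl(\int_0^1 t^{-\alpha}f\,d_qt\bigr)^p$, so you should match against that form.
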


\begin{proof}
Let $f\geq0$ and $\int_0^1 f^p(t) \, d_qt < \infty$. Denoting
$\gamma = \frac{p-1}{p}-\alpha$ and $c_q = (1-q)(1-q^{\gamma})$ we obtain
\begin{eqnarray*}
\int_0^1 f^p(t) \, d_qt
&=&
(1-q)\sum_{j=0}^\infty q^jf^p(q^j) = c_q\sum_{j=0}^\infty q^jf^p(q^j)\sum_{i=-\infty}^0 q^{-i\gamma}\\
&=&
c_q\sum_{j=0}^\infty q^{j(1+\gamma)}f^p(q^j)\sum_{i=-\infty}^jq^{-i\gamma} \\
&=&
c_q\sum_{j=0}^{\infty} q^{j(1+\gamma)}f^p(q^j)\sum_{i=0}^j q^{-i\gamma} +
c_q\sum_{i=-\infty}^{-1} q^{-i\gamma}\sum_{j=0}^{\infty} q^{j(1+\gamma)}f^p(q^j) \\
&=&
c_q\sum_{j=0}^{\infty} q^{j(1+\gamma)}f^p(q^j)\sum_{i=0}^j q^{-i\gamma} +
(1-q) q^{\gamma} \sum_{j=0}^{\infty} q^{j(1+\gamma)}f^p(q^j) \\
&<&
c_q\sum_{i=0}^{\infty} q^{-i\gamma}\sum_{j=i}^{\infty} q^{j(1+\gamma)}f^p(q^j)+(1-q)\sum_{j=0}^{\infty}
q^{j(1+\gamma)}f^p(q^j) : = I_1 + I_2.
\end{eqnarray*}
By using the H\"{o}lder-Rogers inequality with powers $\frac{1}{1-p}$ and $\frac{1}{p}$ we can estimate $I_1$ as follows
\begin{eqnarray*}
I_1
&=&
c_q \sum_{i=0}^\infty q^{-i\gamma}\sum_{j=i}^\infty q^{j(1-p)\gamma}q^{jp(1-\alpha)}f^p(q^j) \\
&<&
c_q\sum_{i=0}^\infty q^{-i\gamma} \left(\sum_{k=i}^\infty q^{k\gamma}\right)^{1-p}\left(\sum_{j=i}^\infty
q^{j(1-\alpha)}f(q^j)\right)^p \\
&=&
c_q\left(\frac{1}{1-q^{\gamma}}\right)^{1-p}\sum_{i=0}^\infty
q^{i(1+p(\alpha-1))}\left(\sum_{j=i}^\infty  q^{j(1-\alpha)} f(q^{j})\right)^p \\
&=&
\left(\left[\frac{p-1}{p}-\alpha\right]_q\right)^p\int_0^1x^{p(\alpha -1)}\left(\int_0^x t^{-\alpha} f(t) \, d_qt \right)^p d_qx.
\end{eqnarray*}
Since $I_2 = (1-q)\sum_{j=0}^{\infty} q^{j(1+\gamma)}f^p(q^j) = \int_{0}^{1} t^{\frac{p-1}{p}-\alpha} f^p(t) \, d_qt$ it follows
from the above calculations that estimate (\ref{5.6}) holds with the constant $ C \leq \left[\frac{p-1}{p}-\alpha\right]_q^p$.

Now, we will show also validity of the inequality (\ref{5.7}). For this purpose we estimate $I_2$, using the H\"{o}lder-Rogers
inequality with powers $\frac{1}{1-p}$ and $\frac{1}{p}$, and obtain
\begin{eqnarray*}
I_2
&=&
(1-q)\sum_{j=0}^\infty q^{j(1-p)\gamma}q^{jp(1-\alpha)}f^p(q^j) <
(1-q)\left(\sum_{k=0}^\infty q^{k\gamma}\right)^{1-p}\left(\sum_{j=0}^\infty q^{j(1-\alpha)}f(q^j)\right)^p \\
&=&
(1-q) (1-q^{\gamma})^{p-1} \left(\sum_{j=0}^{\infty} q^{j(1-\alpha)}f(q^j)\right)^p
=
\left[\frac{p-1}{p}-\alpha\right]_q^{p-1}\left(\int_0^1t^{-\alpha} f(t) \, d_qt \right)^p.
\end{eqnarray*}
Hence, again from the above calculations we obtain
\begin{eqnarray*}
\int_0^1f^p(t) \, d_qt
&<&
\big[\frac{p-1}{p}-\alpha\big]_q^p \, \big{[} \int_0^1 x^{p(\alpha -1)}\big(\int_0^x t^{-\alpha} f(t) \, d_qt \big)^p d_qx \\
&+&
\frac{1}{[\frac{p-1}{p}-\alpha]_q}\big(\int_0^1x^{-\alpha}f(x)d_qx\big)^p\big{]}.
\end{eqnarray*}
This means that (\ref{5.7}) holds with the constant (\ref{5.8}). Next, we show that the constant $[\gamma ]_q^p = [\frac{p-1}{p}-\alpha]_q^p$ 
in both of the inequalities (\ref{5.6}) and (\ref{5.7}) is best possible. To see this we consider the function
$f_{\beta}(t) = t^{\beta}$ for $0 < t \leq1$, where $\beta>-\frac{1}{p}$. Then
\begin{equation*}
\int_0^1 t^{\frac{p-1}{p}-\alpha} f_{\beta}^p(t) \, d_qt = \frac{1-q}{1-q^{1+p\beta+\gamma}},
~~ \int_0^1 f_{\beta}^p(t) \, d_qt = \frac{1-q}{1-q^{1+p\beta}},
\end{equation*}
\begin{equation*}
\int_0^1 x^{p(\alpha -1)}\big(\int\limits_0^x t^{-\alpha} f_{\beta}(t) \, d_qt \big)^p d_qx
= \big(\frac{1-q}{1-q^{1-\alpha+\beta}}\big)^p\frac{1-q}{1-q^{1+p\beta}},
\end{equation*}
and
\begin{equation*}
\frac{1}{[\frac{p-1}{p}-\alpha]_q} \big(\int_0^1 t^{-\alpha} f_{\beta}(t) \, d_qt \big)^p
= \frac{1}{[\frac{p-1}{p}-\alpha]_q}\big(\frac{1-q}{1-q^{1-\alpha+\beta}}\big)^p.
\end{equation*}
If $C > 0$ is a sharp constant in inequality (\ref{5.6}), then
$$
C \geq \big(\frac{1-q^{1-\alpha+\beta}}{1-q}\big)^p \, \frac{1-q-(1-q^{1+p\beta})\frac{1-q}{1-q^{1+p\beta+\gamma}}}{1-q},
$$
and by letting $\beta\rightarrow -\frac{1}{p}$, we find that $C \geq \big[\frac{p-1}{p}-\alpha\big]_q^p$.
Moreover, if $C > 0$ is a sharp constant in inequality (\ref{5.7}), then
$$
C \geq \left(\frac{1-q^{1-\alpha+\beta}}{1-q}\right)^p\frac{1-q}{1-q+\frac{1-q^{1+p\beta}}{[\frac{p-1}{p}-\alpha]_q}}.
$$
Again, by letting $\beta\rightarrow -\frac{1}{p}$, we obtain $C \geq \big[\frac{p-1}{p}-\alpha\big]_q^p$.
The proof is complete.
\end{proof}

\begin{remark}  From Theorem \ref{th3.1} with $\alpha=0$ we obtain the $q$-analogue of the classical Hardy inequality
\begin{equation*}
\int_0^\infty \big(\frac{1}{x}\int\limits_0^x f(t) \, d_qt \big)^p d_qx \leq
\frac{1}{[\frac{p-1}{p}]_q^p} \int_0^\infty f^p(t) \, d_qt,\ f \geq 0,
\end{equation*}
if  $p>1$ or $p<0$ and $f>0$. Moreover, the constant $\frac{1}{[\frac{p-1}{p}]_q^p}$ is best possible
and $\frac{1}{[\frac{p-1}{p}]_q^p} < (\frac{p}{p-1})^p$.
\end{remark}

\begin{remark} If $f \geq 0$ is a continuous function on $[0, 1]$, then by passing to the limit as $q\rightarrow1^-$ 
in (\ref{5.6}) and (\ref{5.7}) we get
\begin{equation}\label{5.17}
\int_0^1 f^p(t)(1-t^{\frac{p-1}{p}-\alpha}) \, dt \leq
\big(\frac{p-1}{p}-\alpha\big)^p\int_0^1 x^{p(\alpha-1)}\big(\int_0^x t^{-\alpha} f(t) \, dt \big)^p dx,
\end{equation}
and
\begin{equation*}
\int_0^1f^p(t) \, dt \leq
\big(\frac{p-1}{p}-\alpha\big)^p \int_0^1 x^{p(\alpha -1)}\big(\int_0^x t^{-\alpha} f(t) \, dt \big)^p d\mu(x)
\end{equation*}
where
$d\mu(x) = \left(1+\frac{p}{p-p\alpha-1} \, \delta(1-x)\right) dx$ and $\delta(\cdot)$ is a Dirac delta function.
\end{remark}
The inequality (\ref{5.17}) is one of the cases recently proved in \cite[Theorem 2.4 (b)]{PS12}.

\begin{section}
{\bf A new sharp inequality for the Riemann-Liouville operator in $q$-analysis}
\end{section}

We need definitions and formulas from the $q$-calculus to be able to define a
$q$-analogue of fractional integration Riemann-Liouville operator of order ${\alpha>0}$.
These facts are taken mainly from the book  \cite{KC02} (see also \cite{Al66} and \cite{SRM09}).

If $x\ge t >0$, then the $q$-analogue of the polynomial $(x-t)^k$ of order $k\in \mathbb{N}$
and the generalized polynomial $(x-t)^\alpha$ of order $\alpha\in\mathbb{R}$ are defined by
the following relations
\begin{equation}\label{2.5}
(x-t)_q^k = x^k\ (\frac{t}{x}; q)_k \,\,\,  {\rm and } \,\, \, (x-t)_q^\alpha = x^\alpha (\frac{t}{x}; q)_\alpha,
\end{equation}
respectively, where a $q$-analogue of the Pochhammer symbol ($q$-shifted factorial) is defined by
$$
(a; q)_0 = 1, ~ (a; q)_k = \prod\limits_{i=0}^{k-1}(1-aq^i) ~ {\rm for} ~ k \in {\mathbb N} \cup \{\infty\} ~ {\rm and} ~
 (a;q)_\alpha=\frac{(a; q)_\infty}{(aq^\alpha; q)_\infty }.
$$
In $q$-analysis the gamma function $\Gamma _q$ has the form
\begin{equation*}
\Gamma _q(x)= \frac{(q; q)_\infty}{(q^x;q)_\infty}(1-q)^{1-x},\ {\rm for} \, x\in\mathbb{R}\setminus \{0, -1, -2, \ldots\},
\end{equation*}
and the beta function $B_q(\cdot,\cdot)$  is defined in the following way:
\begin{equation*}\label{2.7}
B_q(a, b) = \int_0^1 t^{a-1}(qt; q)_{b-1} \, d_qt = (1-q) \sum_{i=0}^\infty q^{ia}(q^{i+1};q)_{b-1}.
\end{equation*}
Moreover, the following relations are valid:
\begin{equation*}\label{2.8}
\Gamma_q(x+1)=[x]_q \, \Gamma_q(x) \,\,\, {\rm and} \,\,\, B_q(a, b)=\frac{\Gamma_q(a)
\Gamma_q(b)}{ \Gamma_q(a+b)}.
\end{equation*}
Finally, the $q$-analogue of the fractional integration Riemann-Liouville
operator of order ${\alpha>0}$ has the form
\begin{equation}\label{2.9}
I_q^\alpha f(x) = \frac{1}{\Gamma_q(\alpha)}\int\limits_0^x(x-q t)^{\alpha-1}_q f(t) \,d_qt.
\end{equation}

Our main result in this section is the following $q$-analogue of the inequality (\ref{1.2}).

\begin{theorem}\label{th4.1}
If $p>1$ and $\alpha>0$, then the inequality
\begin{equation}\label{4.1}
\int_0^\infty \left[\frac{1}{x^\alpha\Gamma_q(\alpha)}\int\limits_0^x
(x- q t)^{\alpha-1}_q f(t) \, d_qt \right]^p d_qx \leq C \int_0^\infty f^p(t) \, d_qt,\ f \geq 0,
\end{equation}
holds with the best constant
\begin{equation}\label{4.2}
C= \left [\frac{\Gamma_q(1-\frac{1}{p})} {\Gamma_q(\alpha+1-\frac{1}{p})} \right]^p.
\end{equation}
\end{theorem}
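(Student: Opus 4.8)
The plan is to follow the discretization-and-duality scheme used for Theorem~\ref{th3.1}, the genuinely new feature being that the Riemann--Liouville average produces a one-sided \emph{convolution} on the $q$-lattice rather than the simple power kernel appearing there. First I would substitute the Jackson-integral definition (\ref{2.1}) into the inner integral of (\ref{4.1}) and evaluate the generalized $q$-polynomial by (\ref{2.5}): since $(x-qt)_q^{\alpha-1}=x^{\alpha-1}(qt/x;q)_{\alpha-1}$, the choice $t=q^{k}x$ gives $(x-q^{k+1}x)_q^{\alpha-1}=x^{\alpha-1}(q^{k+1};q)_{\alpha-1}$, whence
\begin{equation*}
\frac{1}{x^{\alpha}\Gamma_q(\alpha)}\int_0^x(x-qt)_q^{\alpha-1}f(t)\,d_qt=\frac{1-q}{\Gamma_q(\alpha)}\sum_{k=0}^{\infty}\psi_k\,f(q^kx),\qquad \psi_k:=q^{k}(q^{k+1};q)_{\alpha-1}.
\end{equation*}
Evaluating the outer integral by (\ref{2.2}) at $x=q^{j}$ and putting $i=k+j$ turns the left-hand side $L(f)$ of (\ref{4.1}) into
\begin{equation*}
L(f)=\frac{(1-q)^{p+1}}{\Gamma_q(\alpha)^{p}}\sum_{j=-\infty}^{\infty}q^{j}\Big(\sum_{i\ge j}\psi_{i-j}\,f(q^{i})\Big)^{p},
\end{equation*}
which displays the kernel $\psi_{i-j}$ as a function of the difference $i-j$ only.

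Setting $a_i:=q^{i/p}f(q^i)$, so that $\sum_i a_i^p=(1-q)^{-1}\int_0^\infty f^p\,d_qt$, and $k_m:=q^{-m/p}\psi_m$, I note the elementary identity $q^{j/p}\psi_{i-j}f(q^i)=k_{i-j}a_i$. Absorbing the weight $q^{j/p}$ into the bracket and invoking the duality principle in $l_p(\mathbb{Z})$ exactly as in Theorem~\ref{th3.1}, the estimate reduces to bounding $\sum_{i\ge j}g_j\,k_{i-j}\,a_i$ over $g\ge0,\ \|g\|_{l_{p'}}=1$. Splitting $k_{i-j}=k_{i-j}^{1/p'}k_{i-j}^{1/p}$ and applying the H\"older--Rogers inequality on the index set $\{(i,j):i\ge j\}$ produces the bound $\big(\sum_{i\ge j}g_j^{p'}k_{i-j}\big)^{1/p'}\big(\sum_{i\ge j}a_i^{p}k_{i-j}\big)^{1/p}$. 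Because the kernel depends on $i-j$ alone, the inner sums collapse, $\sum_{i\ge j}k_{i-j}=\sum_{j\le i}k_{i-j}=\sum_{m\ge0}k_m=:S$, both double sums factor, and the expression is at most $S\,\|g\|_{l_{p'}}\|a\|_{l_p}=S\,\|a\|_{l_p}$ --- the familiar fact that a one-sided convolution with nonnegative kernel acts on $l_p$ with norm equal to the $l_1$ norm of its kernel.

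The heart of the matter is the evaluation of $S$, and this is the step I expect to carry the real content. Writing $k_m=q^{m/p'}(q^{m+1};q)_{\alpha-1}$ and comparing with the series form of the $q$-beta function, $B_q(a,b)=(1-q)\sum_{i\ge0}q^{ia}(q^{i+1};q)_{b-1}$, I would read off $S=(1-q)^{-1}B_q(\tfrac1{p'},\alpha)$ by taking $a=\tfrac1{p'}$ and $b=\alpha$; the hypotheses $p>1$ and $\alpha>0$ give $\tfrac1{p'}>0$, so this $q$-beta value is finite. The relation $B_q(a,b)=\Gamma_q(a)\Gamma_q(b)/\Gamma_q(a+b)$ then yields $S=(1-q)^{-1}\Gamma_q(1-\tfrac1p)\Gamma_q(\alpha)/\Gamma_q(\alpha+1-\tfrac1p)$. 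Assembling everything, $L(f)\le(1-q)^{p}\Gamma_q(\alpha)^{-p}S^{p}\int_0^\infty f^p\,d_qt$, and on substituting $S$ the factors $(1-q)^{\pm p}$ and $\Gamma_q(\alpha)^{\pm p}$ cancel, leaving precisely the constant (\ref{4.2}).

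For sharpness I would imitate the test-function argument of Theorem~\ref{th3.1}, taking $f_\beta(t)=t^{\beta}\chi_{(0,1]}(t)$ with $\beta>-\tfrac1p$ and letting $\beta\to-\tfrac1p$ from above. Then $a_i=q^{i(1/p+\beta)}\chi_{\{i\ge0\}}$ becomes asymptotically constant along the positive half-line, which is exactly the regime in which the one-sided convolution bound $S$ is attained, so the ratio $L(f_\beta)/\int_0^\infty f_\beta^p\,d_qt$ tends to (\ref{4.2}) from below and the constant is best possible. The only real obstacle is the kernel-sum identity of the third paragraph; once the $q$-average is recognised as a $q$-beta integral, the remaining manipulations run parallel to the Hardy case already settled in Theorem~\ref{th3.1}.
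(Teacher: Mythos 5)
Your proposal is correct and follows essentially the same route as the paper: discretize via the Jackson integral to exhibit the one-sided convolution kernel $(q^{i-j+1};q)_{\alpha-1}q^{(i-j)/p'}$, apply the $l_p(\mathbb{Z})$ duality principle together with the H\"older--Rogers inequality with the kernel split $k^{1/p'}k^{1/p}$, identify the kernel sum as $(1-q)^{-1}B_q(1/p',\alpha)$, and test sharpness with $f_\beta(t)=t^\beta\chi_{(0,1]}(t)$, $\beta\downarrow-1/p$. The only cosmetic difference is that you phrase the upper bound as the Young-type statement that a one-sided convolution has $l_p$ norm at most the $l_1$ norm of its kernel, which is exactly what the paper's Schur-test computation establishes.
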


\begin{proof} Let $f \geq 0$. Based on (\ref{2.1}), (\ref{2.5}) and (\ref{2.9}) we have
\begin{equation}\label{4.3}
I_q^\alpha f(x)= \frac{1}{\Gamma_q(\alpha)}\int\limits_0^x (x - q t)^{\alpha-1}_q f(t) \, d_qt=
\frac{x^\alpha}{\Gamma_q(\alpha)}(1-q)\sum_{i=0}^\infty \left(q^{i+1}; q\right)_{\alpha-1}f(xq^i) \,q^i.
\end{equation}
Then, in view of (\ref{2.2}) and (\ref{4.3}), we find that
\begin{eqnarray*}
\int_0^\infty \left(\frac{I_q^\alpha f(x)}{x^\alpha}\right)^p d_q x
&=&
(1-q)\left(\frac{1-q}{\Gamma_q(\alpha)}\right)^p\sum_{j=-\infty}^\infty\left(\sum_{i=0}^\infty
(q^{i+1};q)_{\alpha-1}f(q^{i+j})q^i\right)^pq^j \\
&=&
(1-q)\left(\frac{1-q}{\Gamma_q(\alpha)}\right)^p\sum_{j=-\infty}^\infty
q^{j(1-p)}\left(\sum_{i=j}^\infty
(q^{i-j+1};q)_{\alpha-1}f(q^i)q^i\right)^p \\
&=&
(1-q)\left(\frac{1-q}{\Gamma_q(\alpha)}\right)^p \, (J_\alpha)^p.
\end{eqnarray*}
By applying the duality principle in $l_p({\mathbb Z})$ and by using the H\"{o}lder-Rogers  inequality we obtain
\begin{eqnarray*}
J_\alpha
&=&
\sup\limits_{\|g\|_{l_{p'}}=1, \ g\ge 0} \sum_{j=-\infty}^\infty g_j q^{-\frac{j}{p'}}
\sum_{i=j}^\infty \left(q^{i-j+1};q\right)_{\alpha-1}f(q^i) \, q^i \\
&=&
\sup\limits_{\|g\|_{l_{p'}}=1, \ g\ge 0} \sum_{j}\sum_{i} g_j q^{\frac{i-j}{p'}}\theta(i-j)
\left(q^{i-j+1};q\right)_{\alpha-1}f(q^i) \, q^\frac{i}{p} \\
&\leq&
\sup\limits_{\|g\|_{l_{p'}}=1, \ g\ge 0}\left(\sum_{j}\sum_{i} g_j^{p'} q^{\frac{i-j}{p'}}\theta(i-j)
\left(q^{i-j+1};q\right)_{\alpha-1} \right)^\frac{1}{p'} \\
&\times&
\left( \sum_{i}\sum_{j} f^p(q^i)q^i q^{\frac{i-j}{p'}}\theta(i-j) \left(q^{i-j+1};q\right)_{\alpha-1} \right)^\frac{1}{p}
=\sup\limits_{\|g\|_{l_{p'}}=1, \ g\ge 0} J_{\alpha,p'} (g) \, J_{\alpha,p} (f),
\end{eqnarray*}
where
$$
J_{\alpha,p'}(g)^{p'} = \sum_{j} \sum_{i} g_j^{p'} q^{\frac{i-j}{p'}} \theta(i-j) \left(q^{i-j+1};q\right)_{\alpha-1} 
$$
and
$$
J_{\alpha,p}(f)^{p} = \sum_{i}  \sum_{j} f^p(q^i) q^i q^{\frac{i-j}{p'}} \theta(i-j) \left(q^{i-j+1};q\right)_{\alpha-1}.
$$
By formulas on beta and gamma functions, we get
\begin{eqnarray*}
\sup\limits_{\|g\|_{l_{p'}}=1, \ g\ge 0}  J_{\alpha,p'}(g)^{p'}
&=&
\sup\limits_{\|g\|_{l_{p'}}=1, \ g\ge 0} \sum_{j} \sum_{i} g_j^{p'} q^{\frac{i-j}{p'}} \theta(i-j) \left(q^{i-j+1};q\right)_{\alpha-1} \\
&=&
\sup\limits_{\|g\|_{l_{p'}}=1, \ g\ge 0} \sum_{j} g_j^{p'} \sum_{i=j}^\infty q^{\frac{i-j}{p'}}\left(q^{i-j+1};q\right)_{\alpha-1} \\
&=&
\sup\limits_{\|g\|_{l_{p'}}=1, \ g\ge 0} \sum_{j} g_j^{p'} \sum_{i=0}^\infty q^{\frac{i}{p'}}\left(q^{i+1};q\right)_{\alpha-1} \\
&=&
\frac{B_q(\frac{1}{p'};\alpha)}{1-q} = \frac{\Gamma_q(1-\frac{1}{p})
\Gamma_q(\alpha)}{\Gamma_q(\alpha+1-\frac{1}{p})}\frac{1}{1-q},
\end{eqnarray*}
\vspace{-3mm}
and
\vspace{-3mm}
\begin{eqnarray*}
J_{\alpha,p}(f)^p
&=&
 \sum_{i}  \sum_{j} f^p(q^i) q^i q^{\frac{i-j}{p'}} \theta(i-j) \left(q^{i-j+1};q\right)_{\alpha-1}\\
 &=&
\sum_{i} f^p(q^i)q^i \sum_{j=-\infty}^i q^{\frac{i-j}{p'}}\left(q^{i-j+1};q\right)_{\alpha-1} \\
&=&
\sum_{i} f^p(q^i)q^i \sum_{j=0}^\infty q^{\frac{j}{p'}}\left(q^{j+1};q\right)_{\alpha-1} \\
&=&
\frac{1}{(1-q)^2} \frac{\Gamma_q(1-\frac{1}{p})\Gamma_q(\alpha)}{\Gamma_q(\alpha+1-\frac{1}{p})}
\int_0^\infty f^p(t) \, d_qt.
\end{eqnarray*}
By combining the above calculations we find that for $ f \geq 0$ we have
\begin{eqnarray*}
\int_0^\infty \left(\frac{I_q^{\alpha} f(x)}{x^\alpha}\right)^p \,d_qx
&=&
\int_0^\infty \left(\frac{1}{x^\alpha\Gamma_q(\alpha)}\int\limits_0^x (x - qt)^{\alpha-1}_q f(t) \, d_qt\right)^pd_qx \\
&=& 
(1-q) \left( \frac{1-q}{\Gamma_q(\alpha)} \right)^p \, (J_{\alpha})^p \\
&\leq& 
(1-q) \left( \frac{1-q}{\Gamma_q(\alpha)} \right)^p \,  \sup\limits_{\|g\|_{l_{p'}}=1, \ g\ge 0} J_{\alpha,p'}(g)^p \, J_{\alpha, p}(f)^p\\
&\leq&
(1-q) \left( \frac{1-q}{\Gamma_q(\alpha)} \right)^p \left[ \frac{\Gamma_q(1-1/p) \Gamma_q(\alpha)}{\Gamma_q(\alpha + 1 - 1/p)} \frac{1}{1-q}\right]^{p-1} \\
&\times& 
\frac{1}{(1-q)^2} \frac{\Gamma_q(1-1/p) \Gamma_q(\alpha)}{\Gamma_q(\alpha + 1 - 1/p)} \int_0^{\infty} f^p(t)\, d_qt\\
&=&
\left[ \frac{\Gamma_q(1-\frac{1}{p})}{\Gamma_q(\alpha+1-\frac{1}{p})} \right]^p  \int_0^\infty f^p(t) \, d_qt,
\end{eqnarray*}
which  means that the inequality (\ref{4.1}) holds with the estimate
$ C \leq \left[ \frac{\Gamma_q(1-\frac{1}{p})}{\Gamma_q(\alpha+1-\frac{1}{p})} \right]^p$
for the best constant $C$.

Now, we give also a lower estimate for the best constant $C$ in (\ref{4.1}). Let $f_\beta(t) = t^\beta \chi_{(0, 1]}(t)$ with 
$ \beta > - \frac{1}{p}$. Then $ \int_0^\infty f^p_\beta(t) \, d_q t = \frac{1-q}{1-q^{1+ p \beta}}$ (cf. proof of Theorem \ref{th3.1} 
in the case $1 < p < \infty$) and
\begin{eqnarray*}
\int_0^\infty \left(\frac{I_q^\alpha f_\beta (x)}{x^\alpha}\right)^p d_q x
&=&
\frac{(1-q)^{p+1}}{\Gamma^p_q(\alpha)} \sum_{j=-\infty}^\infty q^{j(1-p)} \left(\sum_{i=j}^\infty
\left( q^{i-j+1};q\right)_{\alpha-1} f_\beta (q^i) \, q^i \right)^p \\
&\geq&
\frac{(1-q)^{p+1}}{\Gamma^p_q(\alpha)} \sum_{j=0}^\infty q^{j(1-p)} \left(\sum_{i=j}^\infty \left( q^{i-j+1};q\right)_{\alpha-1}
f_\beta (q^i) \, q^i \right)^p \\
&=&
\frac{(1-q)^{p+1}}{\Gamma^p_q(\alpha)} \sum_{j=0}^\infty q^{j(1-p)} \left(\sum_{i=j}^\infty \left( q^{i-j+1};q\right)_{\alpha-1}
q^{i(1+\beta)} \right)^p \\
&=&
\frac{(1-q)^{p+1}}{\Gamma^p_q(\alpha)} \sum_{j=0}^\infty q^{j(1+p\beta)}
\left(\sum_{i=0}^\infty \left( q^{i+1};q\right)_{\alpha-1} q^{i(1+\beta)} \right)^p \\
&=&
\frac{1-q}{\Gamma_q^p(\alpha) \left(1-q^{1+p\beta}\right)}B_q^p(\beta+1,\alpha).
\end{eqnarray*}
If inequality (\ref{4.1}) holds with the best constant $C > 0$, then
$$
C \geq \sup\limits_{\beta>-\frac{1}{p}}
\left(\frac{B_q(\beta+1,\alpha)}{\Gamma_q(\alpha)}\right)^p = \left(\frac{B_q(1-\frac{1}{p},\alpha)}{\Gamma_q(\alpha)}\right)^p=
\left(\frac{\Gamma_q(1-\frac{1}{p})}{\Gamma_q(\alpha+1-\frac{1}{p})}\right)^p,
$$
which shows that constant (\ref{4.2}) is sharp. The proof is complete.
\end{proof}

From Theorem \ref{th4.1} we obtain immediately the validity of the following statement:

\begin{corollary}\label{th4.2}
Let $p>1$ and $\alpha>0$. Then the following inequality is valid
$$
\int_0^1 \left(\frac{I_q^\alpha f (x)}{x^\alpha}\right)^p d_q x <
\left(\frac{\Gamma_q(1-\frac{1}{p})}{\Gamma_q(\alpha+1-\frac{1}{p})}\right)^p
\int_0^1 f^p(t) \, d_qt.
$$
Moreover, the constant $\left(\frac{\Gamma_q(1-\frac{1}{p})}{\Gamma_q(\alpha+1-\frac{1}{p})}\right)^p$
is best possible.
\end{corollary}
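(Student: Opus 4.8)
The plan is to obtain Corollary~\ref{th4.2} from Theorem~\ref{th4.1} in exactly the way Theorem~\ref{th3.2} was deduced from Theorem~\ref{th3.1}. The mechanism is that restricting the outer $q$-integral to $[0,1]$ amounts to letting the summation index $j$ run over $\mathbb{Z}_{\geq 0}$ instead of all of $\mathbb{Z}$, and this turns one of the saturated geometric-type sums appearing in the proof into a genuinely finite (hence strictly smaller) sum, upgrading the estimate from $\leq$ to $<$ while leaving the sharp constant unchanged.

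First I would repeat the opening identity of Theorem~\ref{th4.1} with $x=q^j$, $j\geq 0$, namely
\[
\int_0^1 \Bigl(\frac{I_q^\alpha f(x)}{x^\alpha}\Bigr)^p d_q x
= (1-q)\Bigl(\frac{1-q}{\Gamma_q(\alpha)}\Bigr)^p
\sum_{j=0}^\infty q^{j(1-p)}\Bigl(\sum_{i=j}^\infty (q^{i-j+1};q)_{\alpha-1} f(q^i)\,q^i\Bigr)^p,
\]
and then apply the duality in $l_p(\mathbb{Z}_{\geq 0})$ and the H\"older--Rogers inequality verbatim, producing the same two factors $J_{\alpha,p'}(g)$ and $J_{\alpha,p}(f)$. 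The first factor is unaffected by the restriction: after the shift $i\mapsto i-j$ the inner sum $\sum_{i=j}^\infty q^{(i-j)/p'}(q^{i-j+1};q)_{\alpha-1}$ equals $\sum_{i=0}^\infty q^{i/p'}(q^{i+1};q)_{\alpha-1}=B_q(1/p',\alpha)/(1-q)$ for \emph{every} $j$, so duality still gives the exact identity $J_{\alpha,p'}(g)^{p'}=\frac{B_q(1/p',\alpha)}{1-q}\,\|g\|_{l_{p'}}^{p'}$.

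The strict inequality enters only through $J_{\alpha,p}(f)$: on $[0,1]$ the inner index $j$ runs from $0$ to $i$ rather than from $-\infty$ to $i$, so with $m=i-j$ one is left with the truncated sum $\sum_{m=0}^i q^{m/p'}(q^{m+1};q)_{\alpha-1}$ in place of the full series. Here I would invoke the positivity of the $q$-shifted factorials, $(q^{m+1};q)_{\alpha-1}=(q^{m+1};q)_\infty/(q^{m+\alpha};q)_\infty>0$ for $0<q<1$ and $\alpha>0$, which makes the truncation strictly smaller than $\sum_{m=0}^\infty q^{m/p'}(q^{m+1};q)_{\alpha-1}=B_q(1/p',\alpha)/(1-q)$. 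Bounding the truncated sum by the full one yields $J_{\alpha,p}(f)^p<\frac{B_q(1/p',\alpha)}{(1-q)^2}\int_0^1 f^p\,d_qt$, and combining the two factors exactly as in Theorem~\ref{th4.1} gives the strict inequality with $C=[\Gamma_q(1-1/p)/\Gamma_q(\alpha+1-1/p)]^p$. For sharpness I would reuse the test functions $f_\beta(t)=t^\beta\chi_{(0,1]}(t)$, $\beta>-1/p$, from the lower-bound part of Theorem~\ref{th4.1}; since they are already supported in $(0,1]$ the two $q$-integrals over $[0,1]$ reproduce that computation, and letting $\beta\to -1/p$ forces $C\geq (B_q(1-1/p,\alpha)/\Gamma_q(\alpha))^p=(\Gamma_q(1-1/p)/\Gamma_q(\alpha+1-1/p))^p$, matching the upper bound.

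The only step requiring genuine care is the positivity claim $(q^{m+1};q)_{\alpha-1}>0$, which is precisely what guarantees that truncating the series \emph{strictly} decreases it rather than merely not increasing it; everything else is a transcription of the already-established argument with the index set $\mathbb{Z}$ replaced by $\mathbb{Z}_{\geq 0}$.
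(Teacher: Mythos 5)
Your proposal is correct and follows essentially the same route as the paper: the paper likewise derives the strictness solely from the truncation of the inner sum in $J_{\alpha,p}(f)$ (the index $j$ running over $0\le j\le i$ instead of $-\infty<j\le i$), keeps $J_{\alpha,p'}(g)$ unchanged, and inherits sharpness from the test functions $f_\beta(t)=t^\beta\chi_{(0,1]}(t)$ already used for Theorem~\ref{th4.1}. Your explicit justification of the positivity $(q^{m+1};q)_{\alpha-1}>0$ is a detail the paper leaves implicit, but the argument is the same.
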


The strict inequality we are getting as before in the estimate of $J_{\alpha,p}(f)$. In fact, for finite interval
of integration the sum inside of the expression $J_{\alpha}$ is going from $0$ to $\infty$,
\begin{eqnarray*}
J_{\alpha,p}(f)
&=&
\sum_{i=0}^\infty f^p(q^i)q^i \sum_{j=0}^i q^{\frac{i-j}{p'}}\left(q^{i-j+1};q\right)_{\alpha-1} <
\sum_{i=0}^\infty f^p(q^i)q^i \sum_{j=0}^\infty q^{\frac{j}{p'}}\left(q^{j+1};q\right)_{\alpha-1} \\
&=&
\frac{1}{(1-q)^2} \frac{\Gamma_q(1-\frac{1}{p})\, \Gamma_q(\alpha)}{\Gamma_q(\alpha+1-\frac{1}{p})}
\int_0^\infty f^p(t) \, d_qt.
\end{eqnarray*}

\begin{section}
{\bf Remarks on classical discrete Hardy inequalities}
\end{section}

The Hardy discrete inequality (\ref{1.4}) follows from the Hardy integral inequality (\ref{1.3})
by putting in (\ref{1.3}) a simple nonincreasing function (cf. \cite[p. 248]{HLP52}, \cite[p. 726]{KMP06}
and \cite[p. 155-156]{KMP07}).

Up to now there is no sharp discrete analogue of the Hardy integral inequality (\ref{1.1}) except $\alpha = 0$ 
and this fact was motivation for many authors to establish the following discrete inequalities
\begin{equation}\label{3.21}
\sum_{n=1}^\infty \left(\frac{1}{n^{1-\alpha}}\sum_{k=1}^n
\left[k^{1-\alpha} -(k-1)^{1-\alpha}\right] a_k \right)^p \leq
\left(\frac{(1-\alpha)p}{p-\alpha p -1}\right)^p \sum_{n=1}^\infty
a_n^p,\ a_n\ge 0,
\end{equation}
and
\begin{equation}\label{3.22}
\sum_{n=1}^\infty\left(\frac{1}{\sum\limits_{k=1}^n k^{-\alpha}}\sum_{k=1}^n k^{-\alpha} \,a_k \right)^p
\le \left(\frac{(1-\alpha)p}{p-\alpha
p-1}\right)^p\sum_{n=1}^\infty a_n^p, \ a_n\ge 0.
\end{equation}
For fixed $p > 1$, thanks to a result of Cass and Kratz \cite[Theorem 2]{CK90}, we know that the inequalities
(\ref{3.21}) and (\ref{3.22}) can only hold for $\alpha < 1-1/p$ and if they hold for some $\alpha < 1-1/p$,
then the constant $[(1-\alpha)p/(p-\alpha p-1)]^p = [p/(p - \frac{1}{1-\alpha})]^p$ is best possible since
for $\alpha < 1$ we have
$\lim\limits_{n \rightarrow \infty} \frac{\sum_{k=1}^n [k^{1-\alpha} - (k-1)^{1-\alpha}]}{n[n^{1-\alpha} - (n-1)^{1-\alpha}]}
= \frac{1}{1-\alpha}$ and $\lim\limits_{n \rightarrow \infty} \frac{\sum_{k=1}^n k^{-\alpha}}{n^{1-\alpha}} =
\frac{1}{1-\alpha}$ (see also \cite[pages 374-375]{Ga11}).

Both inequalities were stated by Bennett in \cite[pages 40-41]{Be96} whenever $p > 1, \alpha < 1-1/p$ and $\alpha \leq 0$. 
No proofs were given in \cite{Be96}. The proof of (\ref{3.21}) for $p > 1, \alpha < 1 - 1/p$ and $\alpha < 0$ (for $\alpha = 0$ 
this is just classical discrete Hardy inequality (\ref{1.4})) was given by Bennett \cite[pp. 401-402, 407]{Be98} and
the proof for $p > 1, \alpha < 1 - 1/p$ by Bennett \cite[Theorem 1, pages 31-32]{Be04}, \cite[Theorem 1, pages 804 and 
828-828]{Be06} and Gao \cite[Corollary 3.1]{Ga05}.

Inequality (\ref{3.22}) was proved independently by Gao \cite[Corollary 3.2]{Ga05} and Bennett \cite[Theorem 7]{Be06} for 
$p > 1, \alpha < 1-1/p$ and if either $\alpha \leq -1$ or $0 \leq \alpha < 1$. Moreover, Gao \cite[Theorem 1.1]{Ga08} has shown 
that inequality holds for $p \geq 2$ and $-1/p \leq \alpha \leq 0$ or $1 < p \leq 4/3$ and $- 1 \leq \alpha \leq -1/p$. 
In \cite[Theorem 6.1]{Ga10} he extended the proof to $p \geq 2$ and $- 1 \leq \alpha \leq 0$. This means that they are still some 
regions with no proof of (\ref{3.22}).

Now, let us comment which discrete Hardy inequalities we are getting from the Hardy $q$-inequalities.
Directly from the proof of Theorems \ref{th3.1} and \ref{th3.2} we obtain the following discrete inequalities of
independent interest: for $0 < q < 1$ and $\alpha < 1-\frac{1}{p}$ we have
\begin{equation*}
\sum_{j=-\infty}^\infty \left(q^{j\left(\alpha+\frac{1}{p} - 1\right)}\sum_{i=j}^\infty q^{i\left(1-\frac{1}{p}-\alpha\right)}
a_i\right)^p \leq \frac{1}{(1-q^{1 - \frac{1}{p}-\alpha})^p} \sum_{i=-\infty}^\infty a_i^p, ~ a_i \geq 0,
\end{equation*}
\begin{equation*}
\sum_{j=0}^\infty \left(q^{j\left(\alpha+\frac{1}{p} - 1\right)}\sum_{i=j}^\infty q^{i\left(1-\frac{1}{p}-\alpha\right)}
a_i\right)^p \leq \frac{1}{(1-q^{1- \frac{1}{p}-\alpha})^p} \sum_{i=0}^\infty a_i^p ~ a_i \geq 0.
\end{equation*}
if either $p>1$ or $p<0$ and $a_i > 0 \, (i \in \mathbb Z$ or $i \in {\mathbb N} \cup \{0\}$, respectively) with the best
constant $(1-q^{1-\frac{1}{p}-\alpha})^{-p}$.

The above two inequalities we can rewrite by putting $\lambda = 1 - \frac{1}{p} - \alpha > 0$ to the following new
sharp discrete inequalities: if  $0 < q < 1$ and either $p > 1$ or $p < 0$ and
$a_n > 0$ \, ($n \in \mathbb Z$ or $n \in {\mathbb N} \cup \{0\}$, respectively), then with the best constant

\begin{equation}\label{3.23}
\sum_{n=-\infty}^\infty \left(\frac{1}{q^{\lambda n}} \sum_{k=n}^\infty q^{\lambda k} \, a_k \right)^p \leq
\frac{1}{(1-q^\lambda)^p} \sum_{n=-\infty}^\infty a_n^p, ~ a_n \geq 0,
\end{equation}
\begin{equation}\label{3.24}
\sum_{n=0}^\infty \left(\frac{1}{q^{\lambda n}} \sum_{k=n}^\infty q^{\lambda k} \, a_k\right)^p \leq
\frac{1}{(1-q^\lambda)^p} \sum_{n = 0}^\infty a_n^p, ~ a_n \geq 0.
\end{equation}
For $0<p<1$ the inequality (\ref{3.23}) holds in the reverse direction. If $p > 1$, then in view of (\ref{3.23})
and (\ref{3.24}) by passing to the dual inequalities with substitution of $p$ by $p'$ we obtain

\begin{equation}\label{3.25}
\sum_{n=-\infty}^\infty \left(q^{\lambda n}\sum_{k=-\infty}^n q^{-\lambda k} \, a_k \right)^p \leq
\frac{1}{(1-q^\lambda)^p} \sum_{n=-\infty}^\infty a_n^p, ~ a_n \geq 0,
\end{equation}
\begin{equation}\label{3.26}
\sum_{n=0}^\infty \left(q^{\lambda n}\sum_{k=0}^n  q^{- \lambda k} \, a_k \right)^p \leq
\frac{1}{(1-q^\lambda)^p} \sum_{n=0}^\infty a_n^p, ~ a_n \geq 0.
\end{equation}
In recent years the following weighted Hardy and weighted Copson inequalities are frequently investigated (see, e.g.
\cite{Be06}, \cite{Ga05}, \cite{Ga10} and the references given there)
\begin{equation}\label{w1}
\sum_{n=0}^\infty \big( \frac{\sum_{k=0}^n \lambda_k a_k}{\sum\limits_{k=0}^n \lambda_k} \big)^p \leq A \sum_{n=0}^\infty
a_n^p, ~~~
 \sum_{n=0}^\infty\big(\frac{\sum_{k=n}^\infty \lambda_k a_k}{\sum\limits_{k=n}^\infty \lambda_k} \big)^p \leq
B \, \sum_{n=0}^\infty a_n^p,
\end{equation}

\begin{equation} \label{w2}
\sum_{n=-\infty}^\infty \big(\frac{\sum_{k=-\infty}^n \lambda_k \, a_k}{\sum\limits_{k=-\infty}^n\lambda_k} \big)^p
\leq C \sum_{n=-\infty}^\infty a_n^p, ~~ \sum_{n=-\infty}^\infty \big(\frac{\sum_{k=n}^\infty \lambda_k a_k}{\sum\limits_{k=n}^\infty\lambda_k}\big)^p \leq D \sum_{n=-\infty}^\infty a_n^p,
\end{equation}
where $\lambda_n > 0,$ $a_n \geq 0, n \in {\mathbb N} \cup \{0\}$ or $n \in \mathbb Z$, respectively. However, in general,
the best constants in the above inequalities have not been found yet.

If $\lambda_k = k^{-\alpha}$ for $k \in \mathbb N$ and $\lambda_0 = a_0 = 0$, then the first inequality in (\ref{w1})
became (\ref{3.22}) and it holds with the best constant $A = \left(\frac{(1-\alpha)p}{p-\alpha p -1}\right)^p$ for parameters,
which have been mentioned at the beginning of this part.

Since $\sum\limits_{k=n}^\infty q^{\lambda k} = \frac{q^{\lambda n}}{1-q^\lambda}$, the estimates (\ref{3.24}) and
(\ref{3.23}) imply that the second inequalities in (\ref{w1}) and (\ref{w2}) (the Copson inequalities) with
$\lambda_k = q^{\lambda k} \, (0 < q < 1, \lambda > 0, k \in {\mathbb N} \cup \{0\}$ or $k \in \mathbb Z$, respectively)
for $p > 1$ or $p < 0$ hold with the best constants $B = 1$ and $D = 1$, respectively.

Also since $\sum\limits_{k=-\infty}^n q^{- \lambda k} = \frac{q^{- \lambda n}}{1-q^\lambda}$, the estimate (\ref{3.25})
implies the first inequality in (\ref{w2}) (the Hardy inequality) with $\lambda_k = q^{-\lambda k} \, (0 < q < 1, \lambda > 0,
k \in \mathbb Z$) for $p > 1$ with the best constants $C = 1$. In the case $0 < p < 1$ the second inequality in (\ref{w2})
holds in the reverse direction.

Inequality (\ref{3.26}) and the obvious estimate $\sum_{k=0}^n q^{-\lambda k} \geq q^{-\lambda n}$ imply that the first
inequality in (\ref{w1}) holds with $\lambda_k = q^{-\lambda k} \, (0 < q < 1, \lambda > 0,
k = 0, 1, 2, \ldots$) for $p > 1$ with the estimate $A \leq (1-q^{\lambda})^{-p}$ for the best constant.

From the proof of Theorem \ref{th3.3} we obtain that if $\lambda > 0, 0 < q < 1, a_n \geq 0 \,(n = 0, 1, 2, \ldots)$
and $0 < p < 1$, then the following discrete inequalities hold with the best constants
$$
\sum_{n=0}^\infty \left(q^{- \lambda n}\sum_{k=n}^\infty q^{\lambda k} a_k\right)^p >
\frac{1}{(1-q^\lambda)^p} \sum_{n=0}^\infty (1-q^{\lambda n}) \, a_n^p,
$$
and
$$
\sum_{n=0}^\infty \left(q^{- \lambda n}\sum_{k=n}^\infty
q^{\lambda k} a_k\right)^p + \frac{1}{1-q^\lambda}\left(\sum_{n=0}^\infty
q^{\lambda n} a_n\right)^p > \frac{1}{(1-q^\lambda)^p} \sum_{n=0}^\infty a_n^p.
$$

The proof of the $q$-inequality for the Riemann-Liouville operator gives estimates for matrix operators.
In fact, from the proof of Theorem \ref{th4.1} we obtain the following inequalities: if $0 < q < 1, \alpha > 0$
and $p > 1$, then
\begin{equation}\label{4.10}
\sum_{n=-\infty}^\infty
\left(q^{-\frac{n}{p'}}\sum_{k=n}^\infty\left(q^{k-n+1}; q\right)_{\alpha-1} q^{\frac{k}{p'}} \, a_k\right)^p
\leq E \sum_{n=-\infty}^\infty a_n^p, ~ a_n \geq 0,
\end{equation}
\begin{equation}\label{4.11}
\sum_{n=0}^\infty \left(q^{-\frac{n}{p'}}\sum_{k=n}^\infty\left(q^{k-n+1};q\right)_{\alpha-1} q^{\frac{k}{p'}} \, a_k\right)^p
\leq E \sum_{n=0}^\infty a_n^p, ~ a_n \geq 0,
\end{equation}
with the best constant  $E = \big(\sum\limits_{n=0}^\infty q^{\frac{n}{p'}} \left(q^{n+1}; q\right)_{\alpha-1}\big)^p.$

\noindent
Since $ \sum\limits_{k=n}^\infty q^{\frac{k}{p'}}\left(q^{k-n+1};q\right)_{\alpha-1}
= q^{\frac{n}{p'}} \sum\limits_{k=0}^\infty q^{\frac{k}{p'}} \left(q^{k+1}; q\right)_{\alpha-1} $, then denoting
$$
\overline{Q}_n = \sum\limits_{k=n}^\infty q^{\frac{k}{p'}}\left(q^{k-n+1}; q\right)_{\alpha-1}
$$
we can rewrite inequalities (\ref{4.10}) and (\ref{4.11}) in the following forms:
\begin{equation}\label{4.12}
\sum_{n=-\infty}^\infty \left(\frac{1}{\overline{Q}_n}\sum_{k=n}^\infty q^{\frac{k}{p'}} \left(q^{k-n+1};q\right)_{\alpha-1}
a_k\right)^p \leq \sum_{n=-\infty}^\infty a_n^p, ~a_n \geq 0.
\end{equation}
\begin{equation}\label{4.13}
\sum_{n=0}^\infty \left(\frac{1}{\overline{Q}_n} \sum_{k=n}^\infty q^{\frac{k}{p'}} \left(q^{k-n+1};q\right)_{\alpha-1}
a_k \right)^p \leq \sum_{n=0}^\infty a_n^p, ~a_n \geq 0.
\end{equation}
Moreover, by passing to the dual inequality in (\ref{4.10}) and substituting $p$ by $p'$ we obtain
\begin{equation}\label{4.14}
\sum_{n=-\infty}^\infty \left(q^{\frac{n}{p}}\sum_{k=-\infty}^n\left(q^{n-k+1}; q\right)_{\alpha-1}
q^{-\frac{k}{p}} \, a_k\right)^p \leq
 \left(\sum_{n=0}^\infty q^\frac{n}{p}\left(q^{n+1}; q\right)_{\alpha-1}\right)^p \sum_{n=0}^\infty a_n^p, ~a_n \geq 0.
\end{equation}
Since $\sum\limits_{k=-\infty}^n \left(q^{n-k+1}; q\right)_{\alpha-1} q^{-\frac{k}{p}} =
q^\frac{n}{p} \sum\limits_{j=0}^\infty \left(q^{k+1}; q\right)_{\alpha-1} =:Q_n,$ the inequality (\ref{4.14}) can be written
in the form
\begin{equation}\label{4.15}
\sum_{n=-\infty}^\infty \left(\frac{1}{Q_n}\sum_{k=-\infty}^n \left(q^{n-k+1}; q\right)_{\alpha-1}
q^{-\frac{k}{p}} \,a_k\right)^p \leq \sum_{n=0}^\infty a_n^p, ~ a_n \geq 0.
\end{equation}

Inequalities (\ref{4.12}),(\ref{4.13}) and (\ref{4.15}) are examples of sharp matrix inequalities of the forms

\begin{equation*}
\sum_{n=-\infty}^\infty \big( \frac{\sum_{k=n}^\infty \lambda_{n, k} \,a_k}{\sum\limits_{k=n}^\infty\lambda_{n, k}}\big)^p
\leq \sum_{n=-\infty}^\infty a_n^p, \, \, \, \,
\sum_{n=0}^\infty\big(\frac{\sum_{k=n}^\infty \lambda_{n, k} \,a_k}{\sum\limits_{k=n}^\infty\lambda_{n, k}}\big)^p
\leq \sum_{n=0}^\infty a_n^p,
\end{equation*}
\begin{equation*}
{\rm and} ~~ \sum_{n=-\infty}^\infty\big(\frac{\sum_{k=-\infty}^n \lambda_{n, k} \, a_k}
{\sum\limits_{k=-\infty}^n \lambda_{n, k}}\big)^p \leq \sum_{n=-\infty}^\infty a_n^p, ~ a_n \geq 0,
\end{equation*}
where  $p > 1$ and $\lambda_{n, k} > 0$ are of special form. They are generalizations of three inequalities in (\ref{w1})
and (\ref{w2}).

{\bf Acknowledgements}. The second author was supported by the Scientific Committee of Ministry of Education 
and Science of the Republic of Kazakhstan, grant No.1529/GF, on priority area ``Intellectual potential of the country". 
We thank both referees for some valuable suggestions, which have improved the final version of this paper.


\noindent
{\footnotesize Lech Maligranda and Lars-Erik Persson, Department of Engineering Sciences and Mathematics\\
Lule\r{a} University of Technology, SE-971 87 Lule\r{a}, Sweden\\
{\it E-mail addresses:} {\tt lech.maligranda@ltu.se}, {\tt lars-erik.persson@ltu.se} }\\

\vspace{-3mm}

\noindent
{\footnotesize Ryskul Oinarov, L. N. Gumilyev Eurasian National University, Munaytpasov st. 5\\
010008 Astana, Kazakhstan\\
{\it E-mail address:} {\tt o\_ryskul@mail.ru} }\\

\end{document}